 \def\BIBand{et}%
\definecolor{lightgray}{gray}{0.70}
\renewenvironment{leftbar}[1][\hsize]
{%
\MakeFramed{\hsize#1\advance\hsize-\width\FrameRestore}%
}
{\endMakeFramed}
\newtheorem{theorem}{Theorem}
\newtheorem{lemma}{Lemma}
\newcommand{\cO}{{\mathcal{O}}}
\title{Separable Convex Optimization with Nested Lower and Upper Constraints}
\author{Thibaut Vidal, Daniel Gribel, Patrick Jaillet}
\begin{document}

\begin{center}

\begin{LARGE}
{Separable Convex Optimization with Nested Lower \vspace*{0.15cm}\linebreak and  Upper Constraints}
\end{LARGE}

\vspace*{0.85cm}

\textbf{Thibaut Vidal*, Daniel Gribel} \\
Departamento de Informática, \\ 
Pontifícia Universidade Católica do Rio de Janeiro (PUC-Rio) \\
vidalt@inf.puc-rio.br \\
\vspace*{0.2cm}
\textbf{Patrick Jaillet} \\
Department of Electrical Engineering and Computer Science, \\
Laboratory for Information and Decision Systems,  \\
Operations Research Center, Massachusetts Institute of Technology \\
jaillet@mit.edu \\

\vspace*{0.4cm}

August 2018 

Author Accepted Manuscript (AAM)

Accepted for publication in \emph{INFORMS Journal on Optimization}

\vspace*{0.15cm}

\end{center}
\noindent
\textbf{Abstract.}
We study a convex resource allocation problem in which lower and upper bounds are imposed on partial sums of allocations. This model
is linked to a large range of applications, including production planning, speed optimization, stratified sampling, support vector machines, portfolio management, and telecommunications.
We propose an efficient gradient-free divide-and-conquer algorithm, which uses monotonicity arguments to generate valid bounds from the recursive calls, and eliminate linking constraints based on the information from sub-problems. This algorithm does not need strict convexity or differentiability. It produces an $\epsilon$-approximate solution for the continuous problem in $\cO(n \log m \log \frac{n B}{\epsilon})$ time and an integer solution in $\cO(n \log m \log B)$ time, where $n$ is the number of decision variables, $m$ is the number of constraints, and $B$ is the resource bound.
A complexity of $\cO(n \log m)$ is also achieved for the linear and quadratic cases. These are the best complexities known to date for this important problem class. Our experimental analyses confirm the good performance of the method, which produces optimal solutions for problems with up to 1,000,000 variables in a few seconds. Promising applications to the support vector ordinal regression problem are also investigated.
\vspace*{0.3cm}

\noindent
\textbf{Keywords.}  Convex optimization, resource allocation, nested constraints, speed optimization, lot sizing, stratified sampling, machine learning, support vector ordinal regression

\vspace*{0.5cm}

\noindent
* Corresponding author

\newpage

\section{Introduction}
\label{Introduction}

Resource allocation problems involve the distribution of a fixed quantity of a resource (e.g., time, workforce, money, energy) over a number of tasks in order to optimize a value function. In its most fundamental form, the simple resource allocation problem (RAP) is formulated as the minimization of a separable objective subject to one linear constraint representing the total resource bound. Despite its apparent simplicity, this model has been the focus of a considerable research effort over the years, with more than a hundred articles, as underlined by the surveys of \cite{Patriksson2008}, \cite{Katoh2013}, and \cite{Patriksson2015}. This level of interest arises from its applications in engineering, production and manufacturing, military operations, machine learning, financial economics, and telecommunications, among many other areas.

In several applications, a single global resource bound is not sufficient to model partial budget or investment limits, release dates and deadlines, or inventory or workforce limitations. In these situations, the problem must be generalized to include additional constraints over nested sums of the resource variables. This often leads to the model given in Equations (\ref{LBUB1:1})--(\ref{LBUB1:3}), where the sets~$J_i \subseteq \{1,\dots,n\}$ follow a total order such that  $J_i \subset J_{i+1}$ for $i \in \{1,\dots,m-2\}$:  \vspace*{-0.3cm}
\begin{align}
\min  \hspace*{0.1cm}  f(\mathbf{x}) =  &\sum\limits_{i=1}^{n}  f_i(x_i) & \label{LBUB1:1} \\
\text{s.t.}  \hspace*{0.3cm}  a_i \leq  & \sum\limits_{j \in J_i}  x_j  \leq  \  b_i   &i  \in \{ 1,\dots,m-1\} \label{LBUB1:2} \\
&  \sum\limits_{k=1}^{n}  x_k   =  B   &  \label{LBUB1:2bis} \\
  c_i \leq & \ x_i \leq  d_i & i  \in \{ 1,\dots,n\}. \label{LBUB1:3}
\end{align}

This problem involves a separable convex objective, subject to lower and upper bounds on nested subsets of the variables (Equation \ref{LBUB1:2}) and a global resource constraint (Equation~\ref{LBUB1:2bis}).
Despite being a special case of the former inequalities, the latter constraint is included in the model to emphasize the resource bound $B$. Re-ordering the indices of $x_i$, $c_i$, $d_i$ and $f_i$, we obtain the formulation given by \mbox{Equations  (\ref{LBUB2:1})--(\ref{LBUB2:3})}, where $(\sigma[1],\dots,\sigma[m])$ is a subsequence of $(1,\dots,n)$:  \vspace*{-0.3cm}
\begin{align}
\min  \hspace*{0.1cm}  f(\mathbf{x}) =  &\sum\limits_{i=1}^{n}  f_i(x_i) & \label{LBUB2:1} \\
\text{s.t.}  \hspace*{0.3cm}  a_i \leq  &  \sum\limits_{k=1}^{\sigma[i]}  x_k   \leq  \  b_i   &i  \in \{ 1,\dots,m-1\} \label{LBUB2:2} \\
&  \sum\limits_{k=1}^{n}  x_k   =  B   &  \label{LBUB2:2bis} \\
  c_i \leq & \ x_i \leq  d_i & i  \in \{ 1,\dots,n\}. \label{LBUB2:3}
\end{align}
We assume that the functions $f_i : [c_i, d_i] \rightarrow \Re$ are Lipschitz continuous but not necessarily differentiable or strictly convex, and the coefficients $a_i$, $b_i$, $c_i$, and $d_i$ are integers. To ease the presentation, we define $a_m = b_m = B$, $\sigma[0] = 0$ and $\sigma[m] = n$. We will study this continuous optimization problem as well as its restriction to integer solutions.

We refer to this problem as the RAP with nested lower and upper constraints \mbox{(RAP--NC)}.
As highlighted in Section~\ref{Applications}, the applications of this model include production and capacity planning \citep{Love1973}, vessel speed optimization \citep{Psaraftis2013,Psaraftis2014}, machine learning \citep{Chu2007a}, portfolio management \citep{Bienstock1996}, telecommunications \citep{DAmico2014a} and power management \citep{Gerards2016}. Some of these applications involve large data sets with millions of variables, and in other contexts multiple RAP--NC must be repeatedly solved (e.g., thousands or millions of times) to produce bounds in a tree-search-based algorithm, to optimize vessel speeds over candidate routes within a heuristic search for ship routing, or to perform projection steps in a subgradient procedure for a nonseparable objective. In these situations, complexity improvements are a determining factor between success and failure.

The literature contains a rich set of studies and algorithms for a closely related problem: the NESTED resource allocation problem, a special case of the RAP--NC in which $a_i = -\infty$ for all $i \in \{1,\dots,m-1\}$ (or $b_i = \infty$ for all $i \in \{1,\dots,m-1\}$). With integer variables, NESTED can be solved in $\cO(n \log n \log \frac{B}{n})$ time using a scaling algorithm \citep{Hochbaum1994}, and $\cO(n \log m \log \frac{B}{n})$ time using divide-and-conquer principles \citep{Vidal2014a}. These algorithms, however, are not applicable for joint lower and upper nested constraints, a case which is essential for a large variety of applications, e.g., to model time windows, time-dependent inventory bounds, or investment ranges.

Moreover, in the presence of continuous variables, the notion of computational complexity for convex problems must be carefully defined, since optimal solutions can be irrational and thus not representable in a bit-size computational model. We will use the same conventions as \mbox{\cite{Hochbaum1994}}, by measuring the computational complexity of achieving an \mbox{\emph{$\epsilon$-approximate}} solution, guaranteed to be located in the solution space no further than $\epsilon$ from an optimal solution. We also assume that an oracle is available to evaluate each function $f_i$ in $\cO(1)$ time. When considering such a model of computation, controlling algorithmic approximations can be a hard task. To circumvent this issue, we will use, as in  \cite{Hochbaum1994}, a proximity theorem to transform a continuous problem into an integer problem scaled by an appropriate factor, and to translate the integer solution back to a continuous solution with the desired precision.

\noindent
The main contributions of this paper are the following:
\begin{itemize}
\item We propose an efficient decomposition algorithm for the convex RAP--NC with integer variables.
This algorithm is a non-trivial generalization of that of \cite{Vidal2014a}, and attains the same complexity of $\cO(n \log m \log B)$. Based on a proximity theorem from \mbox{\cite{Moriguchi2011}}, we extend this algorithm to solve the continuous problem in $\cO(n \log m \log \frac{n B}{\epsilon})$ time.
These are the best known complexities, to date, for both problem variants.
The complexity depends on the magnitude of $\log (B/\epsilon)$, a dependency which is known to be unavoidable in the arithmetic model of computation for general forms of convex functions \mbox{\citep{Renegar1987}}. Moreover, the algorithm calls only the oracle for the objective function, without need of gradient information, and does not rely on strict convexity or differentiability.
Finally, Lipschitz continuity is assumed for convenience in the proofs but is not mandatory, as an alternative proof line based on submodular optimization could be adopted otherwise.

\item For the specific case of quadratic functions, with continuous or integer variables, the method runs in $\cO (n \log m)$ time, hence extending the short list of quadratic problems known to be solvable in strongly polynomial~time. This also resolves an open question from \cite{Moriguchi2011}: \emph{``It is an open question whether there exist $\cO (n \log n)$ algorithms for (Nest) with quadratic objective functions''}.

\item We present computational experiments that demonstrate the good performance of the method. We compare it with a known algorithm for the linear case and a general-purpose separable convex optimization solver for the convex case, using benchmark instances derived from three families of applications.

\item We finally integrate the proposed algorithm as a projection step in a projected gradient algorithm for the support vector ordinal regression problem, highlighting promising connections with the machine learning literature.
\end{itemize}

The rest of the paper is organized as follows. 
In Section \ref{Applications}, we review related works in a wide variety of application domains. In Section \ref{section:method}, we describe the proposed algorithm and prove its correctness. In Section \ref{section experiments}, we report our computational experiments, considering linear and convex objectives, as well as support vector ordinal regression problems. Finally, in Section \ref{sec:conclusions}, we provide some concluding remarks.

\section{Related Literature and Applications}
\label{Applications}

We now review the many applications of the RAP--NC, starting with classical operations research and management science applications and then moving to statistics, machine learning, and telecommunications.\\ \vspace*{-0.5cm}

%%%%%%%%%%%%%%%%%%%%%%%%%%%%%%%
\noindent
\textbf{Resource Allocation.}
The resource allocation problem (Equations \ref{LBUB1:1}, \ref{LBUB1:2bis}, and \ref{LBUB1:3}) has long been studied as a prototypical problem. The fastest known algorithms \citep{Frederickson1982,Hochbaum1994} reach a complexity of $\cO(n \log \frac{B}{n})$ for the integer problem, and can be extended to find an $\epsilon$-approximate solution of the continuous problem in $\cO(n \log \frac{B}{\epsilon})$ operations. This complexity is known to be optimal in the algebraic tree model \citep{Hochbaum1994}. Other algorithms have been developed for several generalizations of the RAP in which the constraint set forms a polymatroid. In this context, the greedy algorithm is optimal \citep{Federgruen1986}, albeit only pseudo-polynomial, and efficient scaling algorithms can be developed \citep{Hochbaum1994,Moriguchi2011}. The special case of the integer RAP--NC with $a_i = -\infty$, called NESTED problem, can be solved in $\cO(n \log n \log \frac{B}{n})$ time using a scaling algorithm \citep{Hochbaum1994} or in $\cO(n \log m \log \frac{B}{n})$ time using divide-and-conquer principles \citep{Vidal2014a}.\\ \vspace*{-0.2cm}

%%%%%%%%%%%%%%%%%%%%%%%%%%%%%%%
\noindent
\textbf{Production Planning.}
The formulation given by Equations (\ref{LBUB2:1})--(\ref{LBUB2:3}) is also encountered in early literature on production planning over time with inventory and production costs  \citep{Wagner1958}. 
One of the models most closely related to our work is that of \cite{Love1973}, with time-dependent inventory bounds. The general problem with concave costs (economies of scale) and production capacities is known to be NP-hard. The linear or convex model remains polynomial but is more limited in terms of applicability, although convex production costs can occur in the presence of a limited workforce with possible overtime.
Two relatively recent articles have proposed polynomial algorithms for the linear problem with time-dependent inventory bounds \citep{Sedeno-Noda2004,Ahuja2008}. With upper bounds $x^\textsc{max}_i$ on the production quantities, and time-dependent inventory capacities $I^\textsc{max}_i$, the problem can be stated as:
\begin{align}
\min \hspace*{0.1cm}  f(\mathbf{x},\mathbf{I}) =  &\sum\limits_{i=1}^{n}  p_i(x_i) +  \sum\limits_{i=1}^{n} \alpha_i I_i   & \label{LotSizing:1} \\
\text{s.t.} \hspace{0.3cm} I_i &= I_{i-1} + x_i - d_i & i  \in \{ 2,\dots,n\} \label{LotSizing:2}  \\
I_0 &= K  \label{LotSizing:2bis} \\
 0 &\leq I_i  \leq  I^\textsc{max}_i  & i  \in \{ 1,\dots,n\} \label{LotSizing:3} \\
 0 &\leq x_i \leq  x^\textsc{max}_i & i  \in \{ 1,\dots,n\}. \label{LotSizing:4}
\end{align}
Then, expressing the inventory variables as a function of the production quantities, using $I_i = K + \sum_{k=1}^{i} (x_k - d_k)$, reduces this problem to an \mbox{RAP--NC}:
\begin{align}
\min  \hspace*{0.1cm}  f(\mathbf{x}) =  &\sum\limits_{i=1}^{n}  p_i(x_i) + \sum\limits_{i=1}^{n} \alpha_i \left[ K + \sum_{k=1}^{i} (x_k - d_k) \right]   & \label{LotSizing2:1} \\
 \text{s.t.} \hspace{0.3cm} &\sum_{k=1}^{i} d_k  - K \leq \sum_{k=1}^{i} x_k \leq  \sum_{k=1}^{i} d_k +  I^\textsc{max}_i - K   & i  \in \{ 1,\dots,n\} \label{LotSizing2:2} \\
 &0 \leq x_i \leq  x^\textsc{max}_i & i  \in \{ 1,\dots,n\}. \label{LotSizing2:3}
\end{align}

The objective includes production costs and inventory costs, and the nested constraints model the time-dependent inventory limit.
The algorithm of \cite{Ahuja2008} can solve Equations (\ref{LotSizing:1})--(\ref{LotSizing:4}) in $\cO(n \log n)$ time via a reduction to a minimum-cost network flow problem. The method was extended to deal with possible backorders. However, this good complexity comes at the price of an advanced \emph{dynamic tree} data structure \citep{Tarjan2009} that is used to keep track of the inventory capacities.\\ \vspace*{-0.2cm}

%%%%%%%%%%%%%%%%%%%%%%%%%%%%%%%
\noindent
\textbf{Workforce Planning.}
In contrast with the above studies, which involve the production quantities as decision variables, \cite{Bellman1954} study the balancing of workforce capacity (human or technical resources) over a time horizon under hard production constraints. The variable $x_i$ now represents the workforce variation at period~$i$, and the nested constraints impose bounds on the minimum and maximum workforce in certain periods, e.g., to satisfy forecast production demand. The overall objective, to be minimized, is a convex separable cost function representing positive costs for positive or negative variations of the workforce.\\ \vspace*{-0.2cm}

%%%%%%%%%%%%%%%%%%%%%%%%%%%%%%%
\noindent
\textbf{Vessel Speed Optimization.}
In an effort to reduce fuel consumption and emissions, shipping companies have adopted \emph{slow-steaming} practices, which moderate ship speeds to reduce costs.
This line of research has led to several recent contributions on ship speed optimization, aiming to optimize the vessel speed $v_{i-1,i}$ over each trip segment of length $\delta_{i-1,i}$ while respecting a time-window constraint $[a_i,b_i]$ at each destination $i$. Let $f_i(v_{i-1,i})$ be convex functions representing the fuel costs per mile, over $(i-1,i)$, as a function of $v_{i-1,i}$, and let $t_i$ be the arrival time at $i$. The overall speed optimization problem can be formulated as: 
\begin{align}
\min \hspace{0.2cm}  &f(\mathbf{t},\mathbf{v}) = \sum\limits_{i=2}^{n}  \delta_{i-1,i} \ f_i(v_{i-1,i}) & \label{article2:Fager:1} \\
\text{s.t.}  \hspace{0.3cm}  & a_i \leq t_i \leq b_i  & i \in \{1,\dots,n\} \ \label{article2:Fager:2} \\
&  t_{i-1} + \frac{\delta_{i-1,i}}{v_{i-1,i}} \leq t_{i}  & i \in \{2,\dots,n\} \  \label{article2:Fager:3} \\
& v_{min} \leq v_{i-1,i} \leq v_{max}  & i \in \{2,\dots,n\}.    \label{article2:Fager:4}
\end{align}

Recent work has considered a constant fuel-speed trade-off function on each leg, i.e., $f_i = f_j$ for all $(i,j)$. An $\cO(n^2)$ recursive smoothing algorithm (RSA) was proposed by \cite{Norstad2010} and  \cite{Hvattum2013} for this case. However, assuming constant fuel-speed over the complete trip is unrealistic, since fuel consumption depends on many varying factors, such as sea condition, weather, current, water depth, and ship load \citep{Psaraftis2013,Psaraftis2014}. The model can be improved by dividing the trip into smaller segments and considering different functions $f_i \neq f_j$. This more general model falls outside the scope of applicability of RSA.

Let $v_i^\textsc{opt}$ be the minimum of each function $f_i$. With the change of variables $x_1 = t_1$ and $x_i = t_{i} - t_{i-1}$ for $i \geq 2$, the model can then be reformulated as:
\begin{align}
\min  \hspace{0.2cm}  & f(\mathbf{x}) = \sum\limits_{i=2}^{n}   \delta_{i-1,i} g_i\left(\frac{\delta_{i-1,i}}{x_i} \right) & \label{article2:Fagerbis:1} \\
\text{s.t.}  \hspace{0.2cm}  & a_i \leq \sum_{k=1}^{i} x_k \leq b_i  & i \in \{1,\dots,n\} \ \label{article2:Fagerbis:2} \\
&  \frac{\delta_{i-1,i}}{v_{max}} \leq x_i &  i \in \{2,\dots,n\},  \label{article2:Fagerbis:3}
\end{align}
\begin{equation}
\text{with}  \hspace{0.4cm} g_i(v) =
\begin{cases}
f_i( v_i^\textsc{opt}  ) &  \text{if} \ v \leq v_i^\textsc{opt}  \\
f_i(v) & \text{otherwise.}
\end{cases} \label{article2:Fagerbis:4}
\end{equation}

This model is a RAP--NC with separable convex cost. An efficient algorithm for this problem is critical, since a speed-optimization algorithm is not often used as a stand-alone tool but rather as a subprocedure in a route-planning algorithm \citep{Psaraftis2014}. This subprocedure can be called several million times when embedded in a local search, on subproblems counting a few hundred variables due to the division of each trip into smaller segments with different sea conditions. Finally, the RAP--NC is also appropriate for variants of vehicle routing problems with emission control \citep{Bektas2011a,Kramer2015a,Kramer2015} as well as a special case of project crashing for a known critical path \mbox{\citep{Foldes1993}}.\\ \vspace*{-0.2cm}

\noindent
\textbf{Stratified Sampling.}
Consider a population of $N$ units divided into subpopulations (\emph{strata}) of $N_1,\dots,N_n$ units such that $N_1 + \dots + N_n = N$. 
An optimized stratified sampling method aims to determine the sample size $x_i \in [0,N_i]$ for each stratum, in order to estimate a characteristic of the population while ensuring a maximum variance level $V$ and minimizing the total sampling cost.
Each subpopulation may have a different variance $\sigma_i$, so a sampling plan that is proportional to the size of the subpopulations is frequently suboptimal. The following mathematical model for this sampling design problem was proposed by \cite{Neyman1934} and extended by \cite{Srikantan1963,Hartley1965,Huddleston1970,Sanathanan1971}, and others:
\begin{align}
\min  \hspace*{0.3cm} &\sum\limits_{i=1}^{n}  c_i x_i & \label{Sampling:1} \\
\text{s.t.}  \hspace*{0.3cm} & \sum_{i=1}^n  \frac{N^2_i  \sigma^2_i}{N^2}\left(\frac{1}{x_i} -  \frac{1}{N_i} \right)  \leq V \label{Sampling:2} \\
  0 \leq & \ x_i \leq  N_i & i  \in \{ 1,\dots,n\}. \label{Sampling:3}
\end{align}

This is a classical RAP formulation. Two extensions of this model are noteworthy in our context. \cite{Hartley1965} and \cite{Huddleston1970} considered multipurpose stratified sampling where more than one characteristic is evaluated while ensuring variance bounds. This leads to several constraints of type (\ref{Sampling:2}), and thus to a continuous multidimensional knapsack problem. 
\cite{Sanathanan1971} considered a hierarchy of strata, with variance bounds for the estimates at each level. This situation occurs for example in survey sampling, when one seeks an estimate of a characteristic at both the national level (first-stage stratum) and the regional level (second-stage stratum). When two stages are considered, we obtain the additional constraints:
\begin{equation}
\sum_{i \in S_i}  \frac{N^2_i  \sigma^2_i}{N^2}\left(\frac{1}{x_i} -  \frac{1}{N_i} \right)  \leq V_i, \hspace*{0.7cm}  i \in \{1,\dots,m\}, \label{Sampling:bis} 
\end{equation}
where the $S_i$ are disjoint sets of strata, i.e., $\bigcup_{i=1}^m S_i = \{1,\dots,n\}$ and $S_i \cap S_j = \varnothing$ for all~$i,j$. The inequalities (\ref{Sampling:bis}) lead to constraints on the disjoint subsets, giving a resource allocation problem with generalized upper bounds \mbox{\citep[GUB
 --][]{Hochbaum1994,Katoh2013}}.\\ \vspace*{-0.2cm}

%%%%%%%%%%%%%%%%%%%%%%%%%%%%%%%
\noindent
\textbf{Machine Learning.}
The support vector machine (SVM) is a supervised learning model which, in its most classical form, seeks to separate a set of samples into two classes according to their labels. This problem is modeled as the search for a separating hyperplane between the projection of the two sample classes into a kernel space of higher dimension, in such a way that the classes are divided by a gap that is as wide as possible, and a penalty for misclassified samples is minimized \citep{Cortes1995}.

As a generalization of the SVM, the support vector ordinal regression (SVOR) aims to find $r-1$ parallel hyperplanes so as to separate $r$ ordered classes of samples. As reviewed in \cite{Gutierrez2016}, various models and algorithms have been proposed in recent years to fulfill this task. In particular, the SVOR approach with ``explicit constraints on thresholds'' (SVOREX) of \cite{Chu2007a} obtains a good trade-off between training speed and generalization capability. A dual formulation of SVOREX is presented in Equations~(\mbox{\ref{SVOR:1})--(\ref{SVOR:5})}. $\mathcal{K}$ is the kernel function, corresponding to a dot product in the kernel space, and $n^j$ is the number of samples in a class $j \in \{1,\dots,r\}$.
Each dual variable $\alpha_i^j$ takes a non-null value only when the $i^\text{th}$ sample of the $j^\text{th}$ class is active in the definition of the $j^\text{th}$ hyperplane, for $j \in \{1,\dots,r-1\}$. Similarly, each dual variable $\alpha_i^{*j}$ takes a non-null value only when the $i^\text{th}$ sample of the $j^\text{th}$ class is active in the definition of the $(j-1)^\text{th}$ hyperplane, for $j \in \{2,\dots,r\}$.
Additional constraints and variables~$\mu^j$ impose an order on the hyperplanes. For the sake of simplicity, the dummy variables $\boldsymbol\alpha^{*1}$, $\boldsymbol\alpha^{r}$, $\mu^1$, and $\mu^r$ are defined and should be fixed to zero.
\begin{equation}
\max_{\boldsymbol\alpha,\boldsymbol\alpha^*,\boldsymbol\mu} \sum_{j=1}^{r} \sum_{i=1}^{n^j} (\alpha_i^j + \alpha_i^{*j})  - \frac{1}{2} \sum_{j=1}^{r} \sum_{i=1}^{n^j} \sum_{j'=1}^{r} \sum_{i'=1}^{n^{j'}}  ( \alpha_i^{*j} - \alpha_{i}^{j} )  (\alpha_{i'}^{*j'} - \alpha_{i'}^{j'})  \mathcal{K}(x_i^j,x_{i'}^{j'}) \vspace*{-0.45cm} \label{SVOR:1}
\end{equation}
\begin{align}
 \text{s.t.}  \hspace*{0.3cm} & 0 \leq \alpha_{i}^{j}  \leq C&  \hspace*{1cm} j \in \{1,\dots,r\}, i \in \{1,\dots,n^j\}  \label{SVOR:2}  \\
& 0 \leq \alpha_{i}^{*j}  \leq C&  \hspace*{1cm}   j \in \{1,\dots,r-1\},  i \in \{1,\dots,n^j\}  \label{SVOR:3} \\
& \sum_{i=1}^{n^j} \alpha_i^j + \mu^j = \sum_{i=1}^{n^{j+1}}  \alpha_i^{*j+1} + \mu^{j+1} &   j \in \{1,\dots,r-1\} \label{SVOR:4} \\
& \mu^j \geq 0 &   j \in \{1,\dots,r-1\}. \label{SVOR:5}
\end{align}

The last two constraints of \mbox{Equations~(\ref{SVOR:4})--(\ref{SVOR:5})} can be reformulated to eliminate the $\boldsymbol\mu$ variables, leading to nested constraints on the variables $\boldsymbol\alpha$ and $- \boldsymbol\alpha^*$:
\begin{align}
&\sum_{k=1}^j \left( \sum_{i=1}^{n^k} \alpha_i^k - \sum_{i=1}^{n^{k+1}}  \alpha_i^{*k+1}  \right) \geq 0 \hspace*{1.5cm}   j \in \{1,\dots,r-2\}  \label{SVORnew:1} \\
&\sum_{k=1}^{r-1} \left( \sum_{i=1}^{n^k} \alpha_i^k - \sum_{i=1}^{n^{k+1}}  \alpha_i^{*k+1}  \right) = 0. \hspace*{1.5cm} \label{SVORnew:2}
\end{align}

Overall, the problem of Equations~(\mbox{\ref{SVOR:1})--(\ref{SVOR:3})} and (\mbox{\ref{SVORnew:1})--(\ref{SVORnew:2})} is a nonseparable convex problem over the same constraint polytope as the RAP--NC. Note that the number of nested constraints, corresponding to the number of classes, is usually much smaller than the number of variables, which is proportional to the total number of samples, and thus~$m \ll n$.

The solutions of this formulation are usually sparse, since only a fraction of the samples (support vectors) define the active constraints and separating hyperplanes. 
Given this structure and the size of practical applications, modern solution methods rely on decomposition steps, in which a working set of variables is iteratively re-optimized by a method of choice. Such an approach is referred to as \emph{block-coordinate descent} in \cite{Bertsekas2003}. The convergence of the algorithm can be guaranteed by including in the working set the variables that most severely violate the KKT conditions. \cite{Chu2007a}, in line with the work of \cite{Platt1998}, consider a minimal working set with only two variables at each iteration. The advantage is that the subproblem can be solved analytically in this case, the disadvantage is that a large number of working set selections can be needed for convergence, and the KKT condition check and gradient update may become the bottleneck instead of the optimization itself. To better balance the computational effort and reduce the number of decomposition steps, larger working sets could be considered (e.g., as in SVM$^\textsc{light}$ of \citealt{Joachims1999}). Still, to be successful, the algorithm must solve each subproblem, here a non-separable RAP--NC, very efficiently. Such an alternative optimization approach will be investigated in Section \ref{ordinal}.\\ \vspace*{-0.2cm}

%%%%%%%%%%%%%%%%%%%%%%%%%%%%%%%
\noindent
\textbf{Portfolio Management.}
The mean-variance optimization (MVO) model of \cite{Markowitz1952} has been refined over the years to integrate a large variety of constraints.
In its most classical form,
the model aims to maximize expected return while minimizing a risk measure such as the variance of the return. This problem can be formulated as:
\begin{align}
 & \left\{ \max   \sum_{i=1}^n x_i \mu_i \ ; \  \min \sum\limits_{i=1}^{n} \sum\limits_{j=1}^{n} x_i x_j \sigma_{ij}   \right\} & \label{Portfolio:1} \\
\text{s.t.}  \hspace*{0.3cm} & \sum_{i=1}^n x_i = 1  \label{Portfolio:2} \\
  &0 \leq \ x_i & i  \in \{ 1,\dots,n\}, \label{Portfolio:3}
\end{align}
where the $x_i$ variables, $i \in \{1,\dots,n\}$, represent investments in different assets, $\mu_i$ is the expected return of asset $i$, and $\sigma_{ij}$ the covariance between asset $i$ and $j$. In this model, Equation (\ref{Portfolio:2}) is used to normalize the total investment and Equation (\ref{Portfolio:3}) prevents short-selling. 
%The bi-objective problem may attacked via $\epsilon$-constraints methods, or simply by optimizing a weighted sum of both objectives.
The literature on these models is vast, and we refer to the recent surveys of \cite{Kolm2014} and \cite{Mansini2014} for more thorough descriptions. Two additional constraint families, often used in practical portfolio models, are closely linked with the RAP--NC:
\begin{itemize}
\item \emph{Class constraints} limit the investment amounts for certain classes of assets or economic sectors. These can result from regulatory requirements, managerial insights, or customer guidelines (see, e.g., \citealt{Chang2000} and \citealt{Anagnostopoulos2010}). The assets may also be ranked into different categories, e.g., based on their risk or ecological impact. Imposing investment bounds at each level leads to the nested constraints of Equation~(\ref{LBUB2:2}).

\item \emph{Fixed transaction costs, minimum transaction levels, and cardinality constraints} either impose a fixed price or threshold quantity for any investment in an asset, or limit the number of positions on different assets. These constraints usually require the introduction of additional integer variables $y_i$, taking value one if and only if the asset $i$ is included in the portfolio. 
This leads to quadratic MIPs, for which metaheuristics \citep{Chang2000,Crama2003} and branch-and-cut methods \citep{Bienstock1996,Jobst2001} form the current state-of-the-art. 
\cite{Bienstock1996} branches on the $y_i$ variables and solves a quadratic resource allocation problem, with additional surrogate constraints in the form of Equation (\ref{LBUB2:2}), at each node of the search tree. Improved algorithms for the RAP--NC can thus also prove helpful as a methodological building block for more complex portfolio optimization algorithms.\\ \vspace*{-0.2cm}
\end{itemize}

%%%%%%%%%%%%%%%%%%%%%%%%%%%%%%%
\noindent
\textbf{Telecommunications.}
Constrained resource allocation problems also have a variety of applications in telecommunications. Mobile signals, for example, can be emitted in different directions with different power levels, but interference between signals emitted in the same direction reduces the quality of the communication. In this context, a power and direction must be determined for each signal, while respecting service-quality constraints and minimizing transmission costs. As underlined by \cite{Viswanath2002} and \cite{Padakandla2009a}, this problem can be formulated as an instance of the RAP--NC. Given the large size of typical applications, the efficiency of the algorithm is of foremost importance.

A similar model arises for power minimization in multiple-input and multiple-output communication systems, as well as in various other applications of optimization to telecommunications \citep{DAmico2014a}. Moreover, the RAP--NC generalizes a family of~multilevel \emph{water-filling} problems, which have been the focus of significant research \citep{Palomar2005}. Other applications include power management on multimedia devices, discussed by \cite{Huang2009} and \cite{Gerards2016}.
As illustrated by these example applications, the RAP--NC is a prototypical model and an elementary building block for various problems. Therefore, a new algorithmic breakthrough can have considerable impact in many contexts.\\ \vspace*{-0.2cm}
 
\section{Proposed Methodology}
\label{section:method}

In this section, we first describe the proposed methodology for the case of continuous variables, and then move on to the case with integer variables.
We assume that $a_i \leq b_i$ for $i \in \{1,\dots,n\}$, otherwise the problem is trivially infeasible. We will use boldface notation for vectors and normal font for scalars. Let $\mathbf{e}^s$ be the unit vector such that $e_s = 1$ and $e_i = 0$ for $i \neq s$.

\subsection{Continuous RAP--NC}

The proposed algorithm for the RAP--NC is a divide-and-conquer approach over the indices of the nested constraints. It can be seen as a generalization of the method of \cite{Vidal2014a}, with some fundamental differences related to the number and the nature of the subproblems.
For each range of indices $(v,w)$ considered during the search, such that $1 \leq v \leq w \leq m$, it solves four subproblems, $\text{RAP--NC}_{v,w}(L,R)$ for $L \in \{a_{v-1},b_{v-1}\}$ and $R \in \{a_w,b_w\}$, expressed in \mbox{Equations (\ref{rapnc11})--(\ref{rapnc13})}, obtained by fixing the $(v-1)^\text{th}$ and $w^\text{th}$ nested constraints to their lower or upper bounds. $M$ is a large number, defined to be larger than the Lipschitz constant of each function $f_i$.\vspace*{-0.2cm}
\begin{align}
\text{RAP--NC}_{v,w}(L,R): \hspace*{0.3cm}  \text{min} \  \  &\bar{f}(\mathbf{x}) = \sum_{i=\sigma[v-1]+1}^{\sigma[w]} \bar{f}_i(x_i)   \hspace*{-3.8cm} & \label{rapnc11} \\
\text{s.t.} \hspace*{0.6cm}  a_i - L \leq  &\sum_{k=\sigma[v-1]+1}^{\sigma[i]} x_k \leq \ b_i - L \hspace*{-3.8cm}  & i \in \{v, \dots, w-1\} \label{rapnc12b} \\
&\sum_{i=\sigma[v-1]+1}^{\sigma[w]} x_i = \ R - L   \hspace*{-3.8cm} & \label{rapnc12} \\
&\text{with } \bar{f}_i(x) = 
 \begin{cases}
f_i(c_i) + M (c_i - x) & \text{ if } x < c_i  \\
f_i(x_i) & \text{ if } x \in [c_i,d_i]  \\
f_i(d_i) + M (x - d_i) & \text{ if } x > d_i 
\end{cases}   \hspace*{-3.8cm} \label{rapnc13}
\end{align}

To solve these problems when $v < w$, the algorithm relies on known optimal solutions obtained deeper in a recursion over the range~$(v,u)$ and $(u+1,w)$, with \mbox{$u =  \lfloor (v+w)/2 \rfloor$}. When $v = w$ (at the bottom of the recursion), the $\text{RAP--NC}_{v,v}(L,R)$ does not contain any nested constraints from Equation (\ref{rapnc12b}) and thus reduces to a simple RAP.
We will refer to this approach as the \emph{monotonic decomposition algorithm}, \textsc{MDA}$(v,w)$.
The original RAP--NC is solved by \textsc{MDA}$(1,m)$, and the maximum depth of the recursion is $\lceil \log m \rceil$ since the binary decomposition is performed over the $m$ nested constraints.

In the formulation given by \mbox{Equations (\ref{rapnc11})--(\ref{rapnc13})}, note that the bounds $c_i \leq x_i \leq d_i$ are transferred into the objective via an exact L1 penalty function. This is possible since 
the functions~$f_i$ satisfy the Lipschitz condition (Theorem \ref{reformulation}), and it helps simplify the exposition and proofs.

\begin{theorem}[\textbf{Relaxation--Penalization}]
\emph{
If there exists a solution $x$ of $\text{RAP--NC}_{v,w}(L,R)$ such that $\mathbf{c} \leq \mathbf{x} \leq \mathbf{d}$, then all optimal solutions of $\smash{\text{RAP--NC}_{v,w}(L,R)}$ satisfy $\mathbf{c} \leq \mathbf{x} \leq \mathbf{d}$.
} \label{reformulation}
\end{theorem}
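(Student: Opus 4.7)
My plan is to argue by contradiction through a local exchange argument. Suppose $\mathbf{x}^*$ is optimal for $\text{RAP--NC}_{v,w}(L,R)$ but violates the box, say $x^*_s > d_s$ for some coordinate $s$ (the case $x^*_s < c_s$ is entirely symmetric). I will construct a two-coordinate direction $\mathbf{h} = \mathbf{e}^t - \mathbf{e}^s$ that is feasible for a small step size and along which $\bar{f}$ strictly decreases, producing the desired contradiction.

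The directional derivative computation drives the argument. Because $x^*_s > d_s$ strictly, $\bar{f}_s$ is linear with slope $M$ in a neighborhood of $x^*_s$, so it contributes $-M$ along $\mathbf{h}$. If $t$ can be chosen with $x^*_t < d_t$, the right-derivative of $\bar{f}_t$ at $x^*_t$ is at most the Lipschitz constant of $f_t$ when $x^*_t \in [c_t, d_t]$ and equals $-M$ when $x^*_t < c_t$; in either case it is strictly less than $M$. The total directional derivative is then strictly negative since $M$ is chosen strictly larger than every Lipschitz constant, and convexity of $\bar{f}$ converts this into $\bar{f}(\mathbf{x}^* + \epsilon \mathbf{h}) < \bar{f}(\mathbf{x}^*)$ for sufficiently small $\epsilon > 0$.

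Feasibility of $\mathbf{h}$ for small steps restricts the set of admissible partners $t$. The direction preserves the equality constraint trivially and perturbs the partial sums $S^*_j = \sum_{k=\sigma[v-1]+1}^{\sigma[j]} x^*_k$ only at indices $j$ whose boundary $\sigma[j]$ lies strictly between $s$ and $t$. Setting $\ell = \min\{\sigma[j] : S^*_j = a_j - L,\ \sigma[j] \geq s\}$ (or $\sigma[w]$ if this set is empty) and $u = \max\{\sigma[j] : S^*_j = b_j - L,\ \sigma[j] < s\}$ (or $\sigma[v-1]$ if empty), a direct case analysis ($t<s$ vs.\ $t>s$) shows that the perturbation respects every nested bound for some $\epsilon>0$ precisely when $t \in (u,\ell]$.

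The crux, and the step I expect to be the main obstacle, is showing that $(u,\ell]\setminus\{s\}$ necessarily contains an index with $x^*_t < d_t$. I would prove this by contradiction: if no such $t$ existed, then $x^*_t \geq d_t$ throughout $(u,\ell]$ with strict inequality at $s$, so $\sum_{t\in(u,\ell]} x^*_t > \sum_{t\in(u,\ell]} d_t$. Tightness of the partial sums at $u$ and $\ell$ (or the global equality, should one of the sets be empty) pins the left-hand side to the constant $a_{j_\ell}-b_{j_u}$. The hypothesized $\tilde{\mathbf{x}}\in[\mathbf{c},\mathbf{d}]$ satisfies the same nested bounds, so $\sum_{t\in(u,\ell]}\tilde{x}_t \geq a_{j_\ell}-b_{j_u}$, and box-feasibility forces $\sum_{t\in(u,\ell]}\tilde{x}_t \leq \sum_{t\in(u,\ell]} d_t$. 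Chaining these yields $\sum d_t < \sum x^*_t = a_{j_\ell}-b_{j_u} \leq \sum d_t$, the sought contradiction. Combining this existence result with the directional derivative bound closes the proof.
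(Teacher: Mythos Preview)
Your proof is correct and rests on the same two-coordinate exchange idea as the paper: pick $s$ with $x^*_s>d_s$, find a partner $t$ so that $\mathbf{e}^t-\mathbf{e}^s$ is feasible for small steps, and observe that $\bar f_s$ has slope exactly $M$ at $x^*_s$ while $\bar f_t$ has right-derivative strictly below $M$ at $x^*_t$, yielding strict descent.

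The execution differs slightly from the paper's. You first characterize the admissible partners purely in terms of the active nested constraints of $\mathbf{x}^*$ (the interval $(u,\ell]$), and then use the box-feasible witness $\tilde{\mathbf{x}}$ only non-constructively, through the inequality chain $\sum d_t < \sum x^*_t = a_{j_\ell}-b_{j_u}\le \sum \tilde x_t \le \sum d_t$, to force the existence of some $t$ with $x^*_t<d_t$. The paper instead uses the witness $\mathbf{x}$ directly to \emph{locate} $t$: it compares the partial sums of $\mathbf{x}^*$ and $\mathbf{x}$ and takes $t$ to be the first index after $s$ where the running sum of $\mathbf{x}^*$ drops to or below that of $\mathbf{x}$ (or symmetrically, the first index before $s$ in the other case of Equation~(\ref{testeq})). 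This single crossing point simultaneously guarantees feasibility of the exchange (the nested lower bounds are slack because $\mathbf{x}$ already satisfies them) and the needed strict inequality $x^*_t<x_t\le d_t$. The paper's route is a bit more direct and avoids the separate $(u,\ell]$ bookkeeping; your route makes the structure of admissible exchange directions explicit and is arguably more modular. Two minor points of phrasing: the perturbed partial sums are those with $\sigma[j]\in[\min(s,t),\max(s,t))$, not strictly between $s$ and $t$; and in the degenerate case $(u,\ell]=\{s\}$ your contradiction already refutes the existence of such an $\mathbf{x}^*$, so no exchange step is needed there.
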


\proof
\begin{leftbar} 
Assume the existence of an optimal solution $\mathbf{x^*}$ of $\text{RAP--NC}_{v,w}(L,R)$ with an index~$s \in \{\sigma[v-1]+1,\dots,\sigma[w]\}$ such that $x^*_s > d_s$, and a solution $\mathbf{x}$ such that $\mathbf{c} \leq \mathbf{x} \leq \mathbf{d}$. Since \mbox{$x^*_s > d_s \geq x_s$} and  $\sum_{k=\sigma[v-1]+1}^{\sigma[w]} x^*_k = \sum_{k=\sigma[v-1]+1}^{\sigma[w]} x_k = R - L$, either\vspace*{-0.2cm}
\begin{equation}
\label{testeq}
\sum_{k=\sigma[v-1]+1}^{s}  \hspace*{-0.2cm} x^*_k \hspace*{0.2cm} > \hspace*{-0.2cm} \sum_{k=\sigma[v-1]+1}^{s}  \hspace*{-0.2cm}  x_k \ \hspace*{1cm} \text{ or } \  \hspace*{1cm} \sum_{k=s}^{\sigma[w]}  x^*_k > \sum_{k=s}^{\sigma[w]}  x_k.\vspace*{-0.2cm}
\end{equation}

In the first case, define $t = \min \{ i \ |  \ i > s \text{ and } \sum_{k=\sigma[v-1]+1}^i x^*_k \leq \sum_{k=\sigma[v-1]+1}^i x_k\}$. Observe that $x^*_t < x_t$ and thus $d_t - x^*_t > 0$.
Moreover, there exists $\Delta > 0$ such that, for each $j$ such that  $\sigma[j] \in \{s,\dots,t-1\},$ $ \smash{\sum_{k=\sigma[v-1]+1}^{\sigma[j]} x^*_k - \Delta} > \smash{\sum_{k=\sigma[v-1]+1}^{\sigma[j]} x_k \geq a_i}$.
Defining $\Delta' = \min\{\Delta,  d_t - x^*_t, x^*_s - d_s \}$,
the solution $\mathbf{x'^*} = \mathbf{x^*} + \Delta' ( \mathbf{e}^t -\mathbf{e}^s)$ is feasible and such that $\bar{f}(\mathbf{x'^*}) = \bar{f}(\mathbf{x^*}) + \bar{f}_t(x^*_t + \Delta') - \bar{f}_t(x^*_t) + \bar{f}_s(x^*_s - \Delta') - \bar{f}_s(x^*_s).$ Due to the Lipschitz condition, we have $\bar{f}_t(x^*_t + \Delta') - \bar{f}_t(x^*_t) < M \Delta'$. Moreover, $M \Delta' = \bar{f}_s(x^*_s) -  \bar{f}_s(x^*_s - \Delta')$ and thus  $\bar{f}(\mathbf{x'^*}) < \bar{f}(\mathbf{x^*})$, contradicting the optimality of $\mathbf{x^*}$.

The second case of Equation (\ref{testeq}) is analogous.\qedhere
\end{leftbar}
\endproof

The main challenge of the \textsc{MDA} is now to exploit the information gathered at deeper steps of the recursion to solve each $\text{RAP--NC}$ efficiently. For this purpose, we introduce Theorem~\ref{p1}, which expresses a monotonicity property of the optimal solutions as a function of the resource bound $R$. As shown subsequently in Theorem \ref{p1b}, this result allows to generate tighter bounds on the variables, which supersede the nested constraints of the RAP--NC and allow to solve all subproblems (at all recursion levels) as simple RAPs.

\begin{theorem}[\textbf{Monotonicity}]
\emph{
Consider three bounds $R^{\downarrow} \leq R \leq R^{\uparrow}$.
If $\mathbf{x^\downarrow}$ is an optimal solution of $\text{RAP--NC}_{v,w}(L,R^{\downarrow})$ and $\mathbf{x^\uparrow}$ is an optimal solution of $\text{RAP--NC}_{v,w}(L,R^{\uparrow})$ such that $\mathbf{x^\downarrow} \leq \mathbf{x^\uparrow}$, then there exists an optimal solution $\mathbf{x^*}$ of  $\text{RAP--NC}_{v,w}(L,R)$ such that $\mathbf{x^\downarrow} \leq \mathbf{x^*} \leq \mathbf{x^\uparrow}$.
}
\label{p1}
\end{theorem}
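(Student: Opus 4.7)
The plan is to start from an arbitrary optimal solution $\mathbf{x^*}$ of $\text{RAP--NC}_{v,w}(L,R)$ and argue, via a two-variable swap construction, that if $\mathbf{x^*}$ is chosen among all optima to minimize the total box violation $\sum_i \max(x^*_i - x^\uparrow_i,\,0) + \sum_i \max(x^\downarrow_i - x^*_i,\,0)$, then this violation must be zero. The convex combination $\mathbf{z} = \lambda \mathbf{x^\downarrow} + (1-\lambda)\mathbf{x^\uparrow}$ with $\lambda = (R^\uparrow - R)/(R^\uparrow - R^\downarrow)$ is feasible for $\text{RAP--NC}_{v,w}(L,R)$ because each of its nested prefix sums is a convex combination of those of $\mathbf{x^\downarrow}$ and $\mathbf{x^\uparrow}$; this ensures nonemptiness of the optimal set, and coercivity of the penalized objective $\bar{f}$ (with slope $M$ outside $[\mathbf{c},\mathbf{d}]$) ensures the optimal set is compact, so the minimum of the box-violation function over it is attained.

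Suppose $\mathbf{x^*}$ has positive violation, say $x^*_s > x^\uparrow_s$ at some $s$ (the case $x^*_s < x^\downarrow_s$ is symmetric, with $\mathbf{x^\downarrow}$ replacing $\mathbf{x^\uparrow}$). Define the signed prefix-sum gap $P(j) = \sum_{k=\sigma[v-1]+1}^{j}(x^*_k - x^\uparrow_k)$, which satisfies $P(\sigma[v-1])=0$, $P(\sigma[w]) = R - R^\uparrow \leq 0$, and $P(s) - P(s-1) = x^*_s - x^\uparrow_s > 0$. Hence at least one of two cases must hold: (i) $P(s) > 0$, in which case I set $t = \min\{j > s : P(j) \leq 0\}$; or (ii) $P(s-1) < 0$, in which case I set $t = \max\{j \leq s-1 : P(j-1) \geq 0\}$. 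In case (i), $P > 0$ throughout $[s,t-1]$ and $x^*_t < x^\uparrow_t$, which yields strict slack in the lower nested constraints for $\mathbf{x^*}$ and in the upper nested constraints for $\mathbf{x^\uparrow}$ on that interval; case (ii) is the mirror image. Consequently, there exists $\Delta > 0$, no larger than $\min(x^*_s - x^\uparrow_s,\, x^\uparrow_t - x^*_t)$ and the intermediate prefix-sum slacks, such that both $\mathbf{x}' = \mathbf{x^*} + \Delta(\mathbf{e}^t - \mathbf{e}^s)$ and $\mathbf{y} = \mathbf{x^\uparrow} + \Delta(\mathbf{e}^s - \mathbf{e}^t)$ are feasible for their respective sub-problems.

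The central step is a chain of four inequalities. Optimality of $\mathbf{x^*}$ yields $\bar{f}_t(x^*_t+\Delta) - \bar{f}_t(x^*_t) \geq \bar{f}_s(x^*_s) - \bar{f}_s(x^*_s-\Delta)$; optimality of $\mathbf{x^\uparrow}$ yields $\bar{f}_s(x^\uparrow_s+\Delta) - \bar{f}_s(x^\uparrow_s) \geq \bar{f}_t(x^\uparrow_t) - \bar{f}_t(x^\uparrow_t-\Delta)$; and convexity of $\bar{f}_s$ and $\bar{f}_t$, together with $x^\uparrow_s + \Delta \leq x^*_s$ and $x^*_t + \Delta \leq x^\uparrow_t$, supplies the two secant-monotonicity inequalities in the reverse direction. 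Chaining the four collapses them into equalities, so $\bar{f}(\mathbf{x}') = \bar{f}(\mathbf{x^*})$ and $\mathbf{x}'$ remains optimal. But the swap decreases $x^*_s$ by $\Delta$, strictly reducing the upper-side violation at $s$, while the increase at $t$ creates no new upper violation (since $x^*_t + \Delta \leq x^\uparrow_t$); hence $\mathbf{x}'$ has strictly smaller total box violation than $\mathbf{x^*}$, contradicting minimality.

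The main obstacle will be the case split that defines $t$. Because $\mathbf{x^*}$ and $\mathbf{x^\uparrow}$ do not share a total sum when $R < R^\uparrow$, the prefix-sum gap $P$ need not return to zero, and one has to pin down the sign of $P$ over the chosen interval and verify that this sign matches both the slack side consumed by the forward swap on $\mathbf{x^*}$ (upper or lower nested constraints) and the opposite side consumed by the reverse swap on $\mathbf{x^\uparrow}$. Once this geometric bookkeeping is in place, the convexity-and-swap calculation is routine.
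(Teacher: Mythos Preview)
Your argument is correct. The prefix–sum localisation of a companion index $t$, the two feasible swaps $\mathbf{x}'=\mathbf{x}^*+\Delta(\mathbf{e}^t-\mathbf{e}^s)$ and $\mathbf{y}=\mathbf{x}^\uparrow+\Delta(\mathbf{e}^s-\mathbf{e}^t)$, and the four-term chain (two optimality inequalities, two secant-monotonicity inequalities) collapse exactly as you describe; the compactness of the optimal set via coercivity of the penalised $\bar f$ justifies the existence of a violation-minimising optimum. One small point you leave implicit but which does hold: the swap cannot create new \emph{lower} violations either, since $x'_s\ge x^\uparrow_s\ge x^\downarrow_s$ and $x'_t>x^*_t$.

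The paper reaches the same conclusion by a different mechanism. It writes down the KKT system with multipliers $(\boldsymbol\kappa,\boldsymbol\lambda)$ and the aggregated quantities $\Phi_i=\sum_{k\ge i}(\kappa_k-\lambda_k)\in\partial\bar f_i(x_i)$, finds \emph{two} bracketing indices $r\le s\le t$, and argues via complementary slackness that $\Phi_i^{\textsc k}=\Phi_i^\uparrow=\Psi$ for all $i\in\{r,\dots,t\}$; hence each $\bar f_i$ is affine with common slope $\Psi$ on $[\min(x_i^{\textsc k},x_i^\uparrow),\max(x_i^{\textsc k},x_i^\uparrow)]$, and an explicit $\textsc{Adjust}$ routine rebalances the whole block at once. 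Your route is more elementary---no subdifferentials, no multipliers, only the variational characterisation of optimality and secant monotonicity---and the minimum-violation device replaces the paper's iterative reduction. What the paper's route buys is twofold: the KKT scaffolding is reused verbatim in the later proximity theorem (continuous vs.\ integer optima), and the $\textsc{Adjust}$ procedure is not just a proof device but an actual subroutine of the main algorithm, needed whenever the subproblem solver returns non-monotone optima in the non-strictly-convex case.
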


\proof
\begin{leftbar}
Define $\bar{a}_i = a_i - L$ and $\bar{b}_i = b_i - L$ for $i \in \{v,\dots,w-1\}$ as well as \mbox{$\bar{a}_w = \bar{b}_w = R - L$}.
By the KKT conditions (in the presence of a convex objective over a set of linear constraints), if $\mathbf{x}$ is an optimal solution of $\text{RAP--NC}_{v,w}(L,R)$, then there exist dual multipliers $(\boldsymbol\kappa, \boldsymbol\lambda)$ such that:
\begin{align}
& \Phi_i =   \hspace*{-0.3cm} \sum_{k  \in \{v, \dots, w\} | \sigma[k] \geq i} \hspace*{-0.3cm} (\kappa_k - \lambda_k)  \in  \partial \bar{f}_i(x_i)  & \hspace{1cm} i \in \{\sigma[v-1]+1, \dots, \sigma[w]\}  \label{KKT3} \\
& \bar{a}_i \leq  \sum\limits_{k=\sigma[v-1]+1}^{\sigma[i]}  x_k   \leq  \  \bar{b}_i   &i  \in \{ v,\dots,w\} \label{KKT1} \\
& \kappa_i \left( \sum_{k=\sigma[v-1]+1}^{\sigma[i]} x_k - \bar{a}_i \right) = 0,   \kappa_i \in \Re^+ &  \hspace{1cm} i \in \{v, \dots, w\}  \label{KKT4} \\
& \lambda_i \left( \bar{b}_i - \sum_{k=\sigma[v-1]+1}^{\sigma[i]} x_k\right) = 0,   \lambda_i \in \Re^+ &  \hspace{1cm} i \in \{v, \dots, w\} \label{KKT5}
\end{align}

Note the appearance of the subgradients $\partial \bar{f}_i$ in Equation (\ref{KKT3}), since the functions~$\bar{f}_i$ are not necessarily differentiable. Let $(\boldsymbol\kappa^\uparrow, \boldsymbol\lambda^\uparrow, \boldsymbol\Phi^\uparrow)$ be a set of multipliers associated with the optimal solution $\mathbf{x^{\uparrow}}$ of $\text{RAP--NC}_{v,w}(L,R^{\uparrow})$, and  $\mathbf{x}$ be an optimal solution of $\text{RAP--NC}_{v,w}(L,R)$. Define $\mathcal{S}^+_{\mathbf{x}} = \{i \ | \ x_i > x^{\uparrow}_i \}$, $\mathcal{S}^-_{\mathbf{x}}= \{i \ | \ x_i < x^{\downarrow}_i\}$, and  $\mathcal{S}_{\mathbf{x}}= \{i \ | \  x^{\downarrow}_i \leq x_i \leq x^{\uparrow}_i \}$.
We will present a construct that generates a sequence of solutions $(\mathbf{x}^{\textsc{k}})$, starting from $\mathbf{x}^{0} = \mathbf{x}$, such that 
$\big|\mathcal{S}^+_{\mathbf{x}^{\textsc{k}+1}}\big| < \big|\mathcal{S}^+_{\mathbf{x}^{\textsc{k}}}\big|$
and 
$\big|\mathcal{S}^-_{\mathbf{x}^{\textsc{k}+1}}\big| \leq \big|\mathcal{S}^-_{\mathbf{x}^{\textsc{k}}}\big|$ 
as long as $\big|\mathcal{S}^+_{\mathbf{x}^{\textsc{k}}}\big| > 0$, leading by recurrence to a solution $\mathbf{\bar{x}}$ such that~$\mathbf{\bar{x}} \leq \mathbf{x}^{\uparrow}$.

If $\big|\mathcal{S}^+_{\mathbf{x}^{\textsc{k}}}\big| > 0$, then there exists $s \in \{\sigma[v-1]+1, \dots, \sigma[w]\}$ such that~$x^{\uparrow}_s < x^{\textsc{k}}_s$.
Let~$r$ be the greatest index in $\{\sigma[v-1]+1,\dots,s\}$ such that $\sum_{k=\sigma[v-1]+1}^{r-1} x^{\textsc{k}}_k \geq \sum_{k=\sigma[v-1]+1}^{r-1} x^{\uparrow}_k$, and let~$t$ be the smallest index in $\{s,\dots,\sigma[w]\}$ such that $\smash{\sum_{k=\sigma[v-1]+1}^{t} x^{\textsc{k}}_k \leq \sum_{k=\sigma[v-1]+1}^{t} x^{\uparrow}_k}$.

Since $\smash{R^\uparrow - L = \sum_{i=\sigma[v-1]+1}^{\sigma[w]} x^{\uparrow}_i \geq \sum_{i=\sigma[v-1]+1}^{\sigma[w]} x^{\textsc{k}}_i = R - L}$, and by the definition of $r$~and~$t$, it follows that $\smash{\sum_{i=r}^t x^{\uparrow}_i \geq \sum_{i=r}^t x^{\textsc{k}}_i}$. Moreover, $r < s \Rightarrow x^{\textsc{k}}_r < x^{\uparrow}_r$, and $s < t \Rightarrow  x^{\textsc{k}}_t  < x^{\uparrow}_t$. Finally, note that $r = s =t$ (jointly) is impossible.

\begin{itemize}[nosep]

\item
When $r < s$, the following statements are valid:

\noindent
For each $j$ such that  $\sigma[j]   \in \{ r,\dots, s-1\}$, 
 $\smash{\bar{a}_j \leq \sum_{k=\sigma[v-1]+1}^{\sigma[j]} x^{\textsc{k}}_k < \sum_{k=\sigma[v-1]+1}^{\sigma[j]} x^{\uparrow}_k \leq \bar{b}_j}$ (by the definition of $r$) and thus \mbox{$\kappa^{\uparrow}_j = \lambda^{\textsc{k}}_j = 0$}. As a consequence, $\Phi^{\textsc{k}}_{i} \geq \Phi^{\textsc{k}}_{i+1}$ and $\Phi^{\uparrow}_{i} \leq \Phi^{\uparrow}_{i+1}$  for \mbox{$i \in \{r,\dots,s-1\}$}.
The functions $\bar{f}_i$ are convex, and thus their (Clarke) subgradients are monotone \mbox{\citep{Rockafellar1970}}, i.e., $\{ x^{\uparrow}_s < x^{\textsc{k}}_s, \Phi^{\uparrow}_{s} \in \partial \bar{f}_s(x^{\uparrow}_s), \Phi^{\textsc{k}}_{s} \in \partial \bar{f}_s(x^{\textsc{k}}_s) \} \Rightarrow \Phi^{\uparrow}_{s}   \leq  \Phi^{\textsc{k}}_{s}  $.  Similarly, we have $\{ x^{\uparrow}_r > x^{\textsc{k}}_r, \Phi^{\uparrow}_{r} \in \partial \bar{f}_r(x^{\uparrow}_r), \Phi^{\textsc{k}}_{r} \in \partial \bar{f}_r(x^{\textsc{k}}_r) \} \Rightarrow \Phi^{\uparrow}_{r}  \geq  \Phi^{\textsc{k}}_{r}$.
Combining these relations leads to 
\begin{equation} 
\Phi^{\textsc{k}}_s \leq \Phi^{\textsc{k}}_r \leq \Phi^{\uparrow}_r \leq  \Phi^{\uparrow}_s \leq \Phi^{\textsc{k}}_s,  \label{argument1}
\end{equation}
and thus there exists $\Psi \in \Re$ such that $\Phi^\uparrow_i = \Phi^{\textsc{k}}_i = \Psi$  for \mbox{$i \in \{r,\dots,s\}$}.

\item
\hspace*{-0.5cm} $\bullet$
When $s < t$, the following statements are valid:

\noindent
For each  $j$ such that  $\sigma[j]   \in \{ s,\dots, t-1\}$, 
$\smash{\bar{a}_j \leq \sum_{k=\sigma[v-1]+1}^{\sigma[j]} x^{\uparrow}_k < \sum_{k=\sigma[v-1]+1}^{\sigma[j]} x^{\textsc{k}}_k \leq \bar{b}_j}$  (by the definition of $t$)  and thus $\lambda^{\uparrow}_j = \kappa^{\textsc{k}}_j = 0$. As a consequence, $\Phi^{\textsc{k}}_{i} \leq \Phi^{\textsc{k}}_{i+1}$ and $\Phi^{\uparrow}_{i} \geq \Phi^{\uparrow}_{i+1}$  for $i \in \{s,\dots,t-1\}$.
Furthermore, as before, $x^{\uparrow}_s < x^{\textsc{k}}_s$ and  $x^{\uparrow}_t > x^{\textsc{k}}_t$, and thus $\Phi^{\uparrow}_{s}  \leq \Phi^{\textsc{k}}_{s} $ and $\Phi^{\uparrow}_{t}  \geq \Phi^{\textsc{k}}_{t} $.
Combining these relations leads to 
\begin{equation}
\Phi^{\textsc{k}}_s \leq \Phi^{\textsc{k}}_t \leq \Phi^{\uparrow}_t \leq  \Phi^{\uparrow}_s \leq \Phi^{\textsc{k}}_s, \label{argument2}
\end{equation}
and thus there exists $\Psi \in \Re$ such that $\Phi^\uparrow_i = \Phi^{\textsc{k}}_i = \Psi$  for \mbox{$i \in \{s,\dots,t\}$}.
 \end{itemize}

Overall, $\Phi^\uparrow_i = \Phi^{\textsc{k}}_i = \Psi$ for \mbox{$i \in \{r,\dots,t\}$}, and thus $\Psi \in \partial  \bar{f}_i(x^{\textsc{k}}_i) \cap \partial \bar{f}_i(x^{\uparrow}_i)$ for $i \in \{r,\dots,t\}$. Define $x_i^\textsc{min} = \min\{x^{\textsc{k}}_i,x^{\uparrow}_i\}$ and $x_i^\textsc{max} = \max\{x^{\textsc{k}}_i,x^{\uparrow}_i\}$. This implies that $\partial  \bar{f}_i(x) = \{ \Psi \}$ for $x \in (x_i^\textsc{min},x_i^\textsc{max})$ and thus these functions are affine with identical slope: $\bar{f}_i(x) =  \bar{f}_i(x^{\textsc{k}}_i) + \Psi (x - x^{\textsc{k}}_i)$
for $x \in [x_i^\textsc{min},x_i^\textsc{max}]$. We can thus transfer value from the variables of the set $\mathcal{S}^+ = \mathcal{S}^+_{\mathbf{x}^{\textsc{k}}} \cap \{r,\dots,t\}$ to those of the set $ \overline{\mathcal{S}^+}  = \{r,\dots,t\} - \mathcal{S}^+$, via $\textsc{Adjust}([r,\dots,t],\mathbf{x}^{\textsc{k}},\mathbf{x^\uparrow})$
(Algorithm \ref{algo:transfer}),
leading to a feasible solution $\mathbf{x}^{\textsc{k}+1}$ with the same cost as $\mathbf{x^{\textsc{k}}}$, hence optimal, such that
$$
\begin{cases}
x^{\textsc{k}+1}_i = x^{\uparrow}_i & \text{for } i \in \mathcal{S}^+ \\
x^\textsc{k}_i \leq x^{\textsc{k}+1}_i \leq x^{\uparrow}_i & \text{for } i \in \overline{\mathcal{S}^+} \\
x^{\textsc{k}+1}_i = x^\textsc{k}_i & \text{otherwise}. \\
\end{cases}
$$
We observe that $\big|\mathcal{S}^+_{\mathbf{x}^{\textsc{k}+1}}\big| < \big|\mathcal{S}^+_{\mathbf{x}^{\textsc{k}}}\big|$, moreover
 $\big|\mathcal{S}^-_{\mathbf{x}^{\textsc{k}+1}}\big| \leq \big|\mathcal{S}^-_{\mathbf{x}^{\textsc{k}}}\big|$.
By recurrence, repeating the previous transformation leads to a solution $\mathbf{\bar{x}}$ such that 
 $\mathcal{S}^+_{\mathbf{\bar{x}}} = \varnothing$.
A similar principle can then be applied to 
generate a sequence of solutions $(\mathbf{\bar{x}}^\textsc{k})$, starting from $\mathbf{\bar{x}}^0 = \mathbf{\bar{x}}$, 
such that~\mbox{$\big|\mathcal{S}^-_{\mathbf{\bar{x}}^{\textsc{k}+1}}\big| < \big|\mathcal{S}^-_{\mathbf{\bar{x}}^{\textsc{k}}}\big|$}
and 
$\mathcal{S}^+_{\mathbf{\bar{x}}^{\textsc{k}+1}}  = \varnothing$
as long as $\big|\mathcal{S}^-_{\mathbf{\bar{x}}^{\textsc{k}}}\big| > 0$, leading to an optimal solution $\mathbf{x^{*}}$ such that~$\mathbf{x}^{\downarrow} \leq \mathbf{x}^{*} \leq \mathbf{x}^{\uparrow}$.\qedhere
\end{leftbar}
\endproof

\begin{algorithm}[htbp]
\linespread{1.3}\selectfont

$\Delta \gets 0$ \;
\For{$i=V_1,\dots,V_{|V|}$}
	{
	\If {$x_i > x_i^\uparrow$}
        {
               $\Delta \gets \Delta + x_i - x^\uparrow_i$ \;
              $x_i \gets x^\uparrow_i$ \;
             
	}
}
\For{$i=V_1,\dots,V_{|V|}$}
	{
		\If{$x_i < x_i^\uparrow$}
      	 {
                $\delta = \min \{x^\uparrow_i - x_i,\Delta\}$ \; 
               $x_i = x_i + \delta$ \;
		$\Delta = \Delta - \delta$ \;
	}
}

 \caption{$\textsc{Adjust}(V,\mathbf{x},\mathbf{x^\uparrow})$} \label{algo1} 
 \label{algo:transfer} 
\end{algorithm}

\begin{theorem}[\textbf{Variable Bounds}]
\label{p1b}
\emph{
Let $\mathbf{x^{L a}}$, $\mathbf{x^{L b}}$,  $\mathbf{x^{a R}}$, and  $\mathbf{x^{b R}}$ be optimal solutions of $\text{RAP--NC}_{v,u}(L,a_u)$, $\text{RAP--NC}_{v,u}(L,b_u)$,  $\text{RAP--NC}_{u+1,w}(a_u,R)$, and $\text{RAP--NC}_{u+1,w}(b_u,R)$, respectively. 
If $\mathbf{x^{L a}} \leq \mathbf{x^{L b}}$ and $\mathbf{x^{b R}} \leq \mathbf{x^{a R}}$, then there exists an optimal solution  $\mathbf{x^{*}}$ of $\text{RAP--NC}_{v,w}(L,R)$ such that:
\begin{align}
&x_i^{L a} \leq x^{*}_i \leq x_i^{L b}     &&\hspace*{-2cm} \text{ for } i \in \{\sigma[v-1]+1,\dots,\sigma[u]\},\text{ and} \label{mybounds} \\
&x_i^{b R} \leq x^{*}_i \leq x_i^{a R}    &&\hspace*{-2cm} \text{ for } i \in \{\sigma[u]+1,\dots,\sigma[w]\}. \label{mybound2}
\end{align}
}
\end{theorem}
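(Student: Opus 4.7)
The plan is to start from an arbitrary optimal solution $\mathbf{x^*}$ of $\text{RAP--NC}_{v,w}(L,R)$, split it at index $\sigma[u]$, and argue via the Monotonicity theorem that each half can be independently ``bracketed'' between the provided reference solutions. Concretely, define the intermediate cumulative value $S = L + \sum_{i=\sigma[v-1]+1}^{\sigma[u]} x^*_i$; since $\mathbf{x^*}$ is feasible for the full problem, the constraint of Equation~(\ref{rapnc12b}) at $i=u$ guarantees that $a_u \leq S \leq b_u$.

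The next step is a block-separability observation. The objective $\bar{f}$ decomposes additively across the two index blocks $\{\sigma[v-1]+1,\dots,\sigma[u]\}$ and $\{\sigma[u]+1,\dots,\sigma[w]\}$, and once $S$ is fixed the feasible region of $\text{RAP--NC}_{v,w}(L,R)$ decomposes as the Cartesian product of the feasible regions of $\text{RAP--NC}_{v,u}(L,S)$ and $\text{RAP--NC}_{u+1,w}(S,R)$. Consequently, the two restrictions of $\mathbf{x^*}$ must themselves be optimal for those two subproblems -- otherwise we could substitute a strictly better sub-solution and contradict the optimality of $\mathbf{x^*}$.

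I would then invoke Theorem~\ref{p1} twice. On the first half, apply it directly with $R^{\downarrow}=a_u$, $R=S$, $R^{\uparrow}=b_u$ and optimal solutions $\mathbf{x^\downarrow}=\mathbf{x^{La}}$, $\mathbf{x^\uparrow}=\mathbf{x^{Lb}}$: the hypothesis $\mathbf{x^{La}} \leq \mathbf{x^{Lb}}$ is exactly one of the assumptions, so Theorem~\ref{p1} produces an optimal $\mathbf{y^{(1)}}$ of $\text{RAP--NC}_{v,u}(L,S)$ sandwiched between them. On the second half, the parameter that varies is the \emph{left} endpoint $L$ rather than the right endpoint $R$, so Theorem~\ref{p1} does not literally apply; I would derive the analogous statement by either (a) replaying the same KKT/subgradient-monotonicity argument with the roles of $\boldsymbol{\kappa}$ and $\boldsymbol{\lambda}$ (and of $L$ and $R$) interchanged, or (b) reversing the index order in the second block and replacing partial sums by their complements with respect to $R-L$, which maps a variation of $L$ at fixed $R$ to a variation of the effective right-bound in a reindexed RAP--NC, placing us back inside the scope of Theorem~\ref{p1}. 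Either route yields an optimal $\mathbf{y^{(2)}}$ of $\text{RAP--NC}_{u+1,w}(S,R)$ with $\mathbf{x^{bR}} \leq \mathbf{y^{(2)}} \leq \mathbf{x^{aR}}$.

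It then remains to concatenate: let $\mathbf{x^{**}}$ coincide with $\mathbf{y^{(1)}}$ on the first block of indices and with $\mathbf{y^{(2)}}$ on the second block. This solution is feasible for $\text{RAP--NC}_{v,w}(L,R)$ -- partial sums inside each block are controlled by the respective sub-problems, the cumulative sum at $\sigma[u]$ equals $S\in[a_u,b_u]$, and the total sum equals $R-L$ -- and its objective value matches that of $\mathbf{x^*}$ because each $\mathbf{y^{(j)}}$ shares the optimal cost of its subproblem. Hence $\mathbf{x^{**}}$ is optimal, and the bounds~(\ref{mybounds})--(\ref{mybound2}) hold by construction. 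The main obstacle is precisely the second appeal to monotonicity: Theorem~\ref{p1} is stated only for a varying right-bound, and rigorously justifying the symmetric statement for a varying left-bound -- either through a transparent reindexing/reflection or through a mirrored KKT argument -- is the step that carries the technical weight of the proof.
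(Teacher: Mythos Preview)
Your proposal is correct and follows essentially the same approach as the paper: split an optimal solution at $\sigma[u]$, observe that each block is optimal for its respective subproblem, apply Theorem~\ref{p1} to each block, and recombine. The paper handles the second block with exactly your option~(b), writing only ``after re-indexing the variables downwards from $\sigma[w]$ to $\sigma[u]+1$''; you are more explicit than the paper in identifying this as the step carrying the technical weight and in spelling out the concatenation/feasibility check, but the underlying argument is the same.
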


\proof
\begin{leftbar}
Let $\mathbf{x}$ be an optimal solution of $\text{RAP--NC}_{v,w}(L,R)$. As such, $(x_{\sigma[v-1]+1},\dots, x_{\sigma[u]})$ and $(x_{\sigma[u]+1},\dots, x_{\sigma[w]})$ must be optimal solutions of $\text{RAP--NC}_{v,u}(L,X)$ and $\text{RAP--NC}_{u+1,w}(X,R)$ with $\smash{X = L + \sum_{i=\sigma[v-1]+1}^{\sigma[u]} x_i}$.
Since $a_u \leq X \leq b_u$, there exists an optimal solution $\mathbf{x^{*}}$ of $\text{RAP--NC}_{v,u}(L,X)$ such that $x_i^{L a} \leq x^{*}_i \leq x_i^{L b}$ for $i \in \{\sigma[v-1]+1, \dots,\sigma[u]\}$ via Theorem~\ref{p1}. The other inequality is obtained for $i \in \{\sigma[u]+1,\dots,\sigma[w]\}$ with the same argument, after re-indexing the variables downwards from $\sigma[w]$ to $\sigma[u]+1$.\qedhere
\end{leftbar}
\endproof

As a consequence of Theorems \ref{p1} and \ref{p1b}, the inequalities of Equations (\ref{mybounds})--(\ref{mybound2}) are valid and can be added to the RAP--NC formulation given by Equations (\ref{rapnc11})--(\ref{rapnc13}). Moreover, we show that these inequalities dominate the nested constraints of Equation (\ref{rapnc12b}). Indeed, 
\begin{equation}
\begin{aligned}
x^{L a}_k \leq x_k \leq x^{L b}_k \text{ for }   & k  \in \{ \sigma[v-1]+1,\dots,\sigma[u]\} \text{ and }  i  \in \{ v,\dots,u\} \\ &\Rightarrow      \sum_{k=\sigma[v-1]+1}^{\sigma[i]} x^{L a}_k \leq  \sum_{k=\sigma[v-1]+1}^{\sigma[i]} x_k \hspace*{0.2cm}   \leq   \sum_{k=\sigma[v-1]+1}^{\sigma[i]} x^{L b}_k \\ 
&\Rightarrow \hspace*{1.3cm} \bar{a}_i  \hspace*{0.5cm} \leq   \sum_{k=\sigma[v-1]+1}^{\sigma[i]} x_k  \hspace*{0.2cm}  \leq  \hspace*{0.4cm} \bar{b}_i  \hspace*{0.4cm}  \text{ and}
\end{aligned}
\end{equation}

\begin{equation}
\begin{aligned}
x^{b R}_k \leq x_k \leq x^{a R}_k \text{ for }   &k  \in \{ \sigma[u]+1,\dots,\sigma[w]\} \text{ and }  i  \in \{ u,\dots,w-1\}  \\ &\Rightarrow     \hspace*{0.15cm}\sum_{k=\sigma[i]+1}^{\sigma[w]}\hspace*{0.15cm}  x^{b R}_k  \hspace*{0.1cm} \leq  \hspace*{0.15cm}\sum_{k=\sigma[i]+1}^{\sigma[w]}\hspace*{0.15cm}  x_k \hspace*{0.15cm}   \leq  \hspace*{0.15cm}\sum_{k=\sigma[i]+1}^{\sigma[w]}\hspace*{0.15cm}  x^{a R}_k. \end{aligned}
\label{totot}
\end{equation}
Moreover, Equations (\ref{mybounds})--(\ref{mybound2}) imply that:
\begin{equation}
\sum_{k=\sigma[v-1]+1}^{\sigma[u]} \hspace*{-0.2cm} x^{L b}_k + \sum_{k=\sigma[u]+1}^{\sigma[w]}  \hspace*{-0.2cm} x^{b R}_k =  \sum_{k=\sigma[v-1]+1}^{\sigma[w]}  \hspace*{-0.2cm} x_k = \sum_{k=\sigma[v-1]+1}^{\sigma[u]}  \hspace*{-0.2cm} x^{L a}_k + \sum_{k=\sigma[u]+1}^{\sigma[w]}  \hspace*{-0.2cm} x^{a R}_k = R - L, \label{totot2}
\end{equation}
and combining Equation (\ref{totot}) and (\ref{totot2}) leads to:
\begin{equation}
\begin{aligned}
&\Rightarrow  \sum_{k=\sigma[v-1]+1}^{\sigma[u]} x^{L b}_k + \sum_{k=\sigma[u]+1}^{\sigma[i]}  x^{b R}_k \hspace*{0.2cm} \geq  \sum_{k=\sigma[v-1]+1}^{\sigma[i]}  x_k \hspace*{0.2cm} \geq  \sum_{k=\sigma[v-1]+1}^{\sigma[u]}  \hspace*{-0.2cm} x^{L a}_k + \sum_{k=\sigma[u]+1}^{\sigma[i]}  \hspace*{-0.2cm} x^{a R}_k \\
&\Rightarrow \hspace*{4.4cm} \bar{b}_i  \hspace*{0.2cm} \geq   \sum_{k=\sigma[v-1]+1}^{\sigma[i]} x_k  \hspace*{0.2cm}  \geq  \hspace*{0.3cm} \bar{a}_i.
\end{aligned}
\end{equation}

Therefore, the nested constraints are superseded at each level of the recursion by the variable bounds obtained from the subproblems. The immediate consequence is a problem simplification: without nested constraints, the formulation reduces to a simple RAP given in \mbox{Equations~(\ref{simplerap11})--(\ref{simplerap13})},~which can be efficiently solved by the algorithm of \cite{Frederickson1982} or \cite{Hochbaum1994}.
\begin{align}
\hspace*{-0.5cm}\text{RAP}_{v,w}(L,R,\mathbf{\bar{c}},\mathbf{\bar{d}}): \hspace*{0.3cm}
\text{min} \  \  \bar{f}(\mathbf{x}) = & \sum_{i=\sigma[v-1]+1}^{\sigma[w]} \bar{f}_i(x_i)   \hspace*{-0.8cm} & \label{simplerap11} \\
\text{s.t.} \hspace*{0.3cm} & \sum_{i=\sigma[v-1]+1}^{\sigma[w]} x_i = \ R - L   \hspace*{-0.8cm} & \label{simplerap12} \\
&\bar{c}_i \leq x_i \leq  \bar{d}_i & i  \in \{ \sigma[v-1]+1,\dots,\sigma[w]\}. \label{simplerap13}
\end{align}

The pseudocode of the overall decomposition approach is summarized in Algorithm~\ref{algo-subprobSC}.

\begin{algorithm}[htb]
\linespread{1.05}\selectfont
 \eIf{$v = w$}
{

 $(x_{\sigma[v-1]+1}^{aa},\dots,x_{\sigma[v]}^{aa}) \gets \textsc{Rap}_{v,v}(a_{v-1},a_{w},-\infty,\infty)$ \;
 $(x_{\sigma[v-1]+1}^{ab},\dots,x_{\sigma[v]}^{ab}) \gets \textsc{Rap}_{v,v}(a_{v-1},b_{w},-\infty,\infty)$ \;
 $(x_{\sigma[v-1]+1}^{ba},\dots,x_{\sigma[v]}^{ba}) \gets \textsc{Rap}_{v,v}(b_{v-1},a_{w},-\infty,\infty)$ \;
 $(x_{\sigma[v-1]+1}^{bb},\dots,x_{\sigma[v]}^{bb}) \gets \textsc{Rap}_{v,v}(b_{v-1},b_{w},-\infty,\infty)$ \;

}
{
$u  \gets  \lfloor \frac{v+w}{2} \rfloor$ \;

$\textsc{\textsc{MDA}}(v,u) $ \;

$\textsc{\textsc{MDA}}(u+1,w) $ \;

 \For{$(L,R) \in \{(a,a),(a,b),(b,a),(b,b)\}$}
{
\lIf{$\mathbf{x^{L a}} \nleq  \mathbf{x^{L b}}$}
{$\mathbf{x^{L a}} \gets \textsc{Adjust}([\sigma[v-1]+1,\dots,\sigma[u]],\mathbf{x^{L a}},\mathbf{x^{L b}})$}
 \For{$i = \sigma[v-1]+1$ to $\sigma[u]$}
{ 
$ [\bar{c}_i, \bar{d}_i]   \gets [x^{L a}_i, x^{L b}_i]$ \; 
}
\lIf{$\mathbf{x^{b R}} \nleq  \mathbf{x^{a R}}$}
{$\mathbf{x^{b R}} \gets \textsc{Adjust}([\sigma[w],\dots,\sigma[u]+1],\mathbf{x^{b R}},\mathbf{x^{a R}})$}
 \For{$i = \sigma[u]+1$ to $\sigma[w]$}
{ 
$[\bar{c}_i,\bar{d}_i] \gets [x^{b R}_i, x^{a R}_i]$ \; 
}
 $(x^{L R}_{\sigma[v-1]+1},\dots,x^{L R}_{\sigma[w]})  \gets \textsc{Rap}_{v,w}(L,R,\mathbf{\bar{c}},\mathbf{\bar{d}})$ \;
}
}
 \caption{$\textsc{MDA}(v,w)$} \label{algo-subprobSC} 
\end{algorithm}

\noindent
Two final discussions follow.
\begin{itemize}
\item
First, observe the occurrence of Algorithm \ref{algo:transfer} (\textsc{Adjust} function, introduced in the proof of Theorem~\ref{p1}) before setting the RAP bounds. This $\cO(n)$ time function can only occur when the functions $f_i$ are not strictly convex; in these cases, the solutions of the subproblems may not directly satisfy $\mathbf{x^{L a}} \leq\mathbf{x^{L b}}$ and $\mathbf{x^{b R}} \leq  \mathbf{x^{a R}}$ because of possible ties between resource-allocation choices. Alternatively, one could also use a \emph{stable} RAP solver that guarantees that the solution variables increase monotonically with the resource bound.

\item
Second, note the occurrence of the L1 penalty function associated with the original variables' bounds $c_i$ and $d_i$ in $\bar{f}_i(x_i)$ while $\bar{c}_i$ and $\bar{d}_i$ are maintained as hard constraints. Indeed, some subproblems (e.g.,  $\text{RAP--NC}_{v,v+1}(b_v,a_{v+1})$ when $b_{v} \geq a_{v+1}$ and $\mathbf{c} = 0$) may not have a solution respecting the bounds $c_i$ and $d_i$. On the other hand, the $\bar{c}_i$ and $\bar{d}_i$  constraints can always be fulfilled, otherwise the original problem would be infeasible, and their validity is essential to guarantee the correctness of the algorithm.

\noindent
Nevertheless, since efficient RAP algorithms exist for some specific forms of the objective function, e.g., quadratic \citep{Brucker1984,Ibaraki1988}, we wish to avoid explicit penalty terms in the objective. Therefore we note that an optimal solution $\mathbf{x^*}$ of $\text{RAP}_{v,w}(L,R,\mathbf{\bar{c}},\mathbf{\bar{d}})$ can be obtained as follows:\vspace*{-0.2cm}
\begin{equation}
\mathbf{x^*} =
\begin{cases}
 \mathbf{c'} + \frac{(R - L) - \sum_{i=\sigma[v-1]+1}^{\sigma[w]} c'_{i}}{ \sum_{i=\sigma[v-1]+1}^{\sigma[w]} (\bar{c}_i - c'_i)} (\mathbf{\bar{c}}-\mathbf{c'})& \hspace*{0.5cm} \text{if  } \sum_{i=\sigma[v-1]+1}^{\sigma[w]} c'_{i} >  R-L   \\
  \mathbf{d'} + \frac{(R - L) - \sum_{i=\sigma[v-1]+1}^{\sigma[w]} d'_{i}}{ \sum_{i=\sigma[v-1]+1}^{\sigma[w]} (\bar{d}_i - d'_i)} (\mathbf{\bar{d}}-\mathbf{d'})&  \hspace*{0.5cm}  \text{if  } \sum_{i=\sigma[v-1]+1}^{\sigma[w]} d'_{i} <  R-L \\
\mathbf{x} &  \hspace*{0.5cm}  \text{otherwise}\vspace*{-0.2cm}
\end{cases} \label{myreformulate}
\end{equation}
where $c'_i = \max \{ \bar{c}_i, \min \{ c_i, \bar{d}_i \} \}$, $d'_i = \min \{ \bar{d}_i, \max \{ d_i, \bar{c}_i \} \}$, and  $\mathbf{x}$ is the solution of the same RAP with the hard constraints of Equation (\ref{hard-const}):\vspace*{-0.2cm}
\begin{equation}
c'_i  \leq x_i \leq d'_i \hspace*{2cm} i  \in \{ \sigma[v-1]+1,\dots,\sigma[w]\}. \label{hard-const}
\end{equation}
Thus, the penalty functions for $c_i$ and $d_i$ are taken into account by a $\cO(n)$ test during each RAP resolution, and they never appear in the objective. Experimentally, we observe that the subproblems that fall in the first two cases of Equation~(\ref{myreformulate}) are solved notably faster, since they do not even require finding the minimum of a convex function.
\end{itemize}

\subsection{Integer Optimization and Proximity}

The previous section has considered continuous decision variables and proven the validity of the algorithm when all the subproblems are \emph{solved to optimality}. Still, this proof is of limited practical utility for bit-complexity computational models, since the solutions of separable convex problems can involve irrational numbers (e.g., $\min f(x) = x^3 - 6x, x \geq 0$) which have no finite binary representation. Therefore, assuming that a subproblem is solved to optimality without any assumption on the shape of the functions is impracticable.

For this reason, most articles that present computational complexity results for convex resource allocation and network flow problems rely on the notion of $\epsilon$-approximate solutions, located in the proximity of a truly optimal but not necessarily representable solution. In a decomposition algorithm such as MDA, proving that the method produces an $\epsilon$-approximate solution for a given~$\epsilon$ would require to control the precision of the algorithm at each level of the recursion, which could be cumbersome. Therefore, we adopt another approach, typically used in scaling algorithms \citep{Hochbaum1994,Moriguchi2011}, which consists in proving the validity of the algorithm for integer variables, and using a proximity theorem between the integer and continuous solutions. By solving an integer problem scaled by an appropriate factor, and translating back the integer solution into a continuous solution, any desired $\epsilon$ precision can be achieved.

We define the functions $\bar{f}^\textsc{pl}_i(x) = \bar{f}_i( \lfloor x \rfloor ) +  ( x - \lfloor x \rfloor ) \times ( \bar{f}_i( \lceil x \rceil ) -  \bar{f}_i( \lfloor x \rfloor )  )$, which correspond to an inner linearization of the objective using as base the set of integer values. We call the linearized problem $\text{RAP--NC}^\textsc{pl}_{v,w}(L,R)$; it aims to find the minimum of $\bar{f}^\textsc{pl}(\mathbf{x}) = \sum_{i=\sigma[v-1]+1}^{\sigma[w]} \bar{f}^\textsc{pl}_i(x_i)$ subject to Equations (\ref{rapnc12b})--(\ref{rapnc13}).
Since $\bar{f}_i$ and $\bar{f}^\textsc{pl}_i$ coincide on the integer domain, the integer $\text{RAP--NC}_{v,w}(L,R)$ and $\text{RAP--NC}^\textsc{pl}_{v,w}(L,R)$ have the same set of optimal solutions. Beyond this, there is a close relationship between the solutions of the integer $\text{RAP--NC}^\textsc{pl}_{v,w}(L,R)$ and those of its continuous counterpart, as formulated in Theorem \ref{theo:reformulation}, allowing us to prove the validity of Algorithm~\ref{algo-subprobSC} for integer variables (Theorem~\ref{theo:integer}).

\begin{theorem}[\textbf{Reformulation}]
\label{theo:reformulation}
\emph{
Any optimal solution $\mathbf{x^*}$ of the integer $\text{RAP--NC}^\textsc{pl}_{v,w}(L,R)$ is also an optimal solution of the continuous $\text{RAP--NC}^\textsc{pl}_{v,w}(L,R)$.
}
\end{theorem}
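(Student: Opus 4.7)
The plan is to verify that the integer optimum $\mathbf{x^*}$ satisfies the KKT conditions for the continuous $\text{RAP--NC}^\textsc{pl}_{v,w}(L,R)$. Since $\bar{f}^\textsc{pl}$ is convex (as an inner linearization of a convex function) and the constraints are linear, those conditions are both necessary and sufficient for continuous optimality, exactly as exploited in Equations~(\ref{KKT3})--(\ref{KKT5}) inside the proof of Theorem~\ref{p1}. Therefore it will suffice to produce multipliers $\boldsymbol\kappa,\boldsymbol\lambda \geq 0$ realizing stationarity together with complementary slackness at $\mathbf{x^*}$.

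First, I would characterize the subdifferentials at the integer values $x_i^*$ through the one-sided differences
\begin{equation*}
\Phi_i^- = \bar{f}_i(x_i^*) - \bar{f}_i(x_i^*-1), \qquad \Phi_i^+ = \bar{f}_i(x_i^*+1) - \bar{f}_i(x_i^*),
\end{equation*}
which, by convexity of $\bar{f}_i$, satisfy $\Phi_i^- \leq \Phi_i^+$ and $\partial \bar{f}^\textsc{pl}_i(x_i^*) = [\Phi_i^-,\Phi_i^+]$. Second, I would translate integer optimality into unit-swap inequalities: for every pair $(s,t)$ such that $\mathbf{x^*} + \mathbf{e}^t - \mathbf{e}^s$ remains feasible,
\begin{equation*}
\bar{f}_t(x_t^*+1) - \bar{f}_t(x_t^*) + \bar{f}_s(x_s^*-1) - \bar{f}_s(x_s^*) \geq 0, \qquad \text{i.e.,} \qquad \Phi_t^+ \geq \Phi_s^-.
\end{equation*}
Whether a swap is feasible is governed solely by which nested partial sums are tight on the restrictive side between $s$ and $t$, so this produces a purely combinatorial family of inequalities indexed by pairs of indices.

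Third, I would assemble the KKT multipliers. The stationarity expression $\Phi_i = \sum_{k\,:\,\sigma[k] \geq i}(\kappa_k - \lambda_k)$ is piecewise constant in $i$, taking a value $S_j$ on each block $(\sigma[j-1],\sigma[j]]$. A feasible assignment must therefore satisfy $S_j \in [P_j,Q_j]$, where $P_j = \max_i \Phi_i^-$ and $Q_j = \min_i \Phi_i^+$ over the block, together with sign conditions on $\Delta_j := S_j - S_{j+1}$ dictated by complementary slackness (nonnegative only at lower-tight constraints, nonpositive only at upper-tight ones, and zero on slack constraints). The intra-block swaps from Step~2 guarantee $P_j \leq Q_j$, while the inter-block swaps across slack or uniformly-tight segments guarantee the compatibility relations $Q_{j_2} \geq P_{j_1}$ needed for a monotone sweep. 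A valid multiplier set can then be produced by scanning $j$ from $w$ down to $v$, projecting $S_j$ onto $[P_j,Q_j]$ subject to the required sign of $\Delta_j$.

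The main obstacle is this last step: turning the combinatorial swap inequalities into an explicit feasibility certificate for the dual system. This is essentially an LP-duality exercise on a network whose consecutive-ones incidence makes every relevant subdeterminant well-behaved, but it has to be spelled out carefully so that every case of simultaneous lower- and upper-tightness, as well as degeneracies across adjacent blocks with equal $\Phi$-values, is covered. Once the multipliers are in hand, $\mathbf{x^*}$ satisfies the continuous KKT system for $\text{RAP--NC}^\textsc{pl}_{v,w}(L,R)$ and is therefore a continuous optimum, which is the desired conclusion.
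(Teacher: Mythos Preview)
Your route is genuinely different from the paper's. The paper never constructs KKT multipliers; it gives a short primal rounding argument by contradiction. Suppose some continuous $\mathbf{x}$ strictly beats $\mathbf{x^*}$. Since $R-L\in\mathbb{Z}$, at least two coordinates of $\mathbf{x}$ are fractional; let $s<t$ be the \emph{first} two. Every nested partial sum at an index $\sigma[j]\in\{s,\dots,t-1\}$ then equals an integer plus the fractional part of $x_s$, hence lies strictly between its integer bounds, so shifting mass between $x_s$ and $x_t$ by $\delta\le\min\{\lceil x_s\rceil-x_s,\,x_t-\lfloor x_t\rfloor\}$ (or the symmetric amount) preserves feasibility. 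Choosing the direction by comparing the slopes on the two active linear pieces keeps the objective from increasing and makes at least one more coordinate integer. Iterating yields an integer point no worse than $\mathbf{x}$, contradicting the integer optimality of $\mathbf{x^*}$. The whole argument is a few lines and uses only the piecewise linearity of $\bar f^{\textsc{pl}}$ together with the integrality of the bounds.

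Your plan---deriving unit-swap inequalities from integer optimality and then assembling $(\boldsymbol\kappa,\boldsymbol\lambda)$ by a backward sweep---is sound in principle, and it has the conceptual appeal of producing directly the dual certificate that Theorem~\ref{theo:integer} later invokes. But the step you label the ``main obstacle'' is essentially the entire proof, and it is only sketched. Showing that the system $S_j\in[P_j,Q_j]$ with the prescribed signs on $\Delta_j$ is always feasible requires handling runs of slack constraints (where several consecutive $S_j$ must coincide on a value in an intersection of intervals, not merely satisfy pairwise inequalities $Q_{j_2}\ge P_{j_1}$), one-sided tightness, and the degenerate case $a_j=b_j$. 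All of this can be made to work---the consecutive-ones structure you mention is exactly why---but it is real case analysis that your proposal does not carry out, whereas the paper's primal rounding bypasses it entirely.
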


\proof
\begin{leftbar}
By contradiction.
Suppose that $\mathbf{x^*}$ is not an optimal solution of the continuous $\text{RAP--NC}^\textsc{pl}_{v,w}(L,R)$. Hence, there exists $\mathbf{x}$ such that $\bar{f}^\textsc{pl}(\mathbf{x})  < \bar{f}^\textsc{pl}(\mathbf{x^*}) $, and the set $\{i \ | \  x_i - \lfloor x_i \rfloor > 0 \}$ contains at least two elements since $\smash{\sum_{i=\sigma[v-1]+1}^{\sigma[w]} x_i} = R-L \in  \mathbb{Z}$.
Let $s$ and $t$ be, respectively, the first and second indices in this set.
We know that the functions $f^\textsc{pl}_s$ and $f^\textsc{pl}_t$ are linear in $[ \lfloor x_s \rfloor, \lceil x_s \rceil ]$ and $[ \lfloor x_t \rfloor, \lceil x_t \rceil ]$, respectively, with slope $\Phi_s$ and $\Phi_t$.
Observe that the solution
\begin{equation}
\mathbf{x'} =
\begin{cases}
\mathbf{x} + \min \{\lceil x_s \rceil - x_s, x_t - \lfloor x_t \rfloor\} ( \mathbf{e}^s - \mathbf{e}^t) & \text{ if }  \Phi_s \leq \Phi_t, \\
\mathbf{x} + \min \{\lceil x_t \rceil - x_t,x_s - \lfloor x_s \rfloor\} ( \mathbf{e}^t - \mathbf{e}^s) & \text{ otherwise,}  \\
\end{cases}
\end{equation}
is feasible and such that $\bar{f}^\textsc{pl}(\mathbf{x'})  \leq \bar{f}^\textsc{pl}(\mathbf{x})$. Also, note that the number of non-integer values of $\mathbf{x'}$ has been strictly decreased (by one or two). Repeating this process, we obtain an integer solution~$\mathbf{x^{**}}$ such that $\bar{f}^\textsc{pl}(\mathbf{x^{**}}) \leq \bar{f}^\textsc{pl}(\mathbf{x}) < \bar{f}^\textsc{pl}(\mathbf{x^{*}})$. This contradicts the original assumption that $\mathbf{x^{*}}$ is an optimal solution of the integer $\text{RAP--NC}^\textsc{pl}_{v,w}(L,R)$.\qedhere
\end{leftbar}
\endproof

\begin{theorem}[\textbf{Integer variables}]
\label{theo:integer}
\emph{
Theorems \ref{reformulation}, \ref{p1}, \ref{p1b} and Algorithm \ref{algo-subprobSC} remain valid for RAP--NCs with integer variables.
}
\end{theorem}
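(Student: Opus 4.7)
The plan is to reduce the integer case to the continuous case via the piecewise-linear reformulation $\text{RAP--NC}^\textsc{pl}$ introduced just before the theorem. For any integer instance I would work with the continuous problem $\text{RAP--NC}^\textsc{pl}_{v,w}(L,R)$ obtained by interpolating each $\bar{f}_i$ linearly between consecutive integer points. Since $f_i$ is convex, $\bar{f}_i$ (which only adds linear $L_1$ penalties outside $[c_i,d_i]$) is convex on $\mathbb{Z}$, and its piecewise-linear extension $\bar{f}^\textsc{pl}_i$ is therefore a convex, Lipschitz function on $\mathbb{R}$ with the same Lipschitz constant $M$. Hence the continuous $\text{RAP--NC}^\textsc{pl}_{v,w}(L,R)$ satisfies exactly the hypotheses assumed in Section~\ref{section:method}, so Theorems~\ref{reformulation}, \ref{p1}, and~\ref{p1b} apply verbatim to it.

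First I would invoke Theorem~\ref{theo:reformulation} to identify the optimal integer solutions of $\text{RAP--NC}^\textsc{pl}$ with the optimal continuous solutions of $\text{RAP--NC}^\textsc{pl}$; since $\bar{f}^\textsc{pl}_i$ coincides with $\bar{f}_i$ on integers, these are exactly the optimal integer solutions of the original $\text{RAP--NC}_{v,w}(L,R)$. For each of Theorems~\ref{reformulation}, \ref{p1} and~\ref{p1b}, I would then lift the integer optima of the instances involved to continuous optima of the corresponding $\text{RAP--NC}^\textsc{pl}$, apply the continuous theorem to obtain a continuous witness (e.g.\ an $\mathbf{x^*}$ with $\mathbf{x^\downarrow} \leq \mathbf{x^*} \leq \mathbf{x^\uparrow}$ in the monotonicity result), and round that witness back to an integer witness with the same properties. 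The rounding is provided by a variant of the exchange argument from the proof of Theorem~\ref{theo:reformulation}: successive two-index transfers of fractional mass preserve any componentwise bounds relative to integer reference solutions, because a fractional coordinate strictly between two integer envelopes always leaves room to round toward either side.

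For Algorithm~\ref{algo-subprobSC}, I would verify inductively that every computed quantity is integer-valued. At the leaves the routine calls $\textsc{Rap}_{v,v}$ on integer data $(a_{v-1}, b_{v-1}, a_w, b_w, c_i, d_i)$, which admits an integer-optimal solution via the classical integer RAP algorithms of \cite{Frederickson1982} or \cite{Hochbaum1994}. In the recursive step the bounds $\bar{c}_i, \bar{d}_i$ are inherited from integer subproblem solutions, the right-hand side $R-L$ is integer, and the $\textsc{Adjust}$ routine fed two integer vectors produces an integer output since each transferred $\delta$ is an integer difference. Thus the RAP subproblems remain integer problems throughout the recursion and can again be handed to an integer RAP solver.

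The main obstacle, in my view, is the rounding step in the second paragraph: given integer $\mathbf{x^\downarrow}, \mathbf{x^\uparrow}$ and a continuous witness $\mathbf{x^*}$ with $\mathbf{x^\downarrow} \leq \mathbf{x^*} \leq \mathbf{x^\uparrow}$, one must extract an integer witness respecting both these componentwise bounds and the nested partial-sum constraints of the RAP--NC. The componentwise bounds will follow because any fractional $x_i^*$ sits strictly between its integer envelopes, so the two-index exchange of Theorem~\ref{theo:reformulation} shifts values only within $[x_i^\downarrow, x_i^\uparrow]$; preservation of the nested partial sums reduces to the same observation, since each exchange perturbs any partial sum by at most one unit while fractional slack is present at both endpoints. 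Once this rounding lemma is in place, Theorems~\ref{reformulation}, \ref{p1}, \ref{p1b} and the correctness of Algorithm~\ref{algo-subprobSC} transfer to the integer setting essentially without change.
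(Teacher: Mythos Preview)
Your approach is correct but takes a more roundabout path than the paper's. You apply the continuous theorems as black boxes to the $\text{RAP--NC}^\textsc{pl}$ problem, obtain a continuous witness, and then round it to an integer one via a separate rounding lemma (which you rightly flag as the main obstacle). The paper instead observes that the \emph{proofs} of Theorems~\ref{reformulation}, \ref{p1}, and~\ref{p1b} already go through verbatim with integer inputs: the only continuous-specific ingredient is the KKT system in the proof of Theorem~\ref{p1}, and Theorem~\ref{theo:reformulation} guarantees that any integer optimum satisfies the KKT conditions of the continuous $\text{RAP--NC}^\textsc{pl}$ (with the functions $f_i^\textsc{pl}$). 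Since the \textsc{Adjust} routine maps integer vectors to integer vectors, the sequence $(\mathbf{x}^\textsc{k})$ constructed in the proof of Theorem~\ref{p1} stays integer throughout whenever one starts from an integer optimum, so the integer witness is produced directly and no rounding step is ever needed. Your route does work---the two-index exchange of Theorem~\ref{theo:reformulation} preserves componentwise integer envelopes, and the nested constraints survive because the affected partial sums (those between the first two fractional indices) are themselves fractional and hence strictly interior to their integer bounds---but the paper's direct argument sidesteps this extra lemma entirely.
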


\proof
\begin{leftbar}
The mathematical arguments used in these proofs are independent of the continuous or integer nature of the variables. Moreover, the solution transformation of Algorithm~\ref{algo:transfer} preserves the integrality of the variables. The only element that requires continuous variables is the use of the (necessary) KKT conditions in Equations (\ref{KKT3})--(\ref{KKT5}). However, as we have demonstrated in Theorem~\ref{theo:reformulation}, an optimal solution of the $\text{RAP--NC}^\textsc{pl}_{v,w}(L,R)$ with integer variables is also an optimal solution of the continuous $\text{RAP--NC}^\textsc{pl}_{v,w}(L,R)$. Thus, the KKT conditions with functions $f_i^\textsc{pl}$ are necessary, hence completing the proof.\qedhere
\end{leftbar}
\endproof

Finally, we exploit a proximity result for the solutions of the continuous and integer $\text{RAP--NC}^\textsc{pl}_{v,w}(L,R)$:

\begin{theorem}[\textbf{Proximity}]
\label{proximity theorem}
\emph{
For any integer optimal solution $\mathbf{x^*}$ of $\text{RAP--NC}$ with $n \geq 2$ variables, there is a continuous optimal solution $\mathbf{x}$ such that
\begin{equation}
\label{eqprox}
| x_i - x^*_i | < n-1, \text { for }  i \in \{1,\dots,n\}.
\end{equation}
}
\end{theorem}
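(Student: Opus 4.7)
My plan is to argue by contradiction through an exchange construction in the spirit of Theorem~\ref{p1}. Fix the given integer optimum $\mathbf{x^*}$ and, among all continuous optimal solutions of the RAP--NC, select one $\mathbf{x}$ that minimizes the $\ell_1$-distance $\sum_{i=1}^n |x_i - x^*_i|$; such a minimizer exists because the continuous feasible region is compact and $f$ is continuous. The goal is to show that this particular $\mathbf{x}$ already satisfies~(\ref{eqprox}).

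Suppose for contradiction that $|x_s - x^*_s| \geq n-1$ for some index~$s$; by symmetry, assume $x_s - x^*_s \geq n-1$. Because $\sum_i x_i = \sum_i x^*_i = B$, the total deficit $\sum_{i : x_i < x^*_i}(x^*_i - x_i)$ equals the total excess, which is at least $n-1$. Since at most $n-1$ indices contribute to the deficit, the pigeonhole principle yields an index $t \neq s$ with $x^*_t - x_t \geq 1$. The contradiction will come from transferring a small amount $\delta > 0$ from $x_s$ to $x_t$ in $\mathbf{x}$ along a suitable feasible chain, in a way that preserves the objective value: the resulting continuous optimum would be strictly closer to $\mathbf{x^*}$ in $\ell_1$, contradicting the minimality of~$\mathbf{x}$.

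Implementing the transfer follows the blueprint of the proof of Theorem~\ref{p1}. By Theorem~\ref{theo:reformulation}, $\mathbf{x^*}$ is a continuous optimum of the PL-linearized problem, so KKT conditions of the form (\ref{KKT3})--(\ref{KKT5}) hold at $\mathbf{x^*}$ with subgradients in $\partial \bar{f}^{\textsc{pl}}_i$, while the analogous conditions hold at $\mathbf{x}$ with subgradients in $\partial \bar{f}_i$. Taking $s < t$ (the opposite case is symmetric), I would pick the largest $r \leq s$ and smallest $q \geq t$ at which the partial-sum discrepancy $\sum_{k \leq \cdot}(x_k - x^*_k)$ changes sign, eliminate the appropriate dual multipliers on $[r, q]$ by complementarity, and chain the resulting subgradient inequalities exactly as in~(\ref{argument1})--(\ref{argument2}) to isolate a common value $\Psi$ lying in both $\partial \bar{f}_i(x_i)$ and $\partial \bar{f}^{\textsc{pl}}_i(x^*_i)$ for every $i \in \{r,\ldots,q\}$. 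This pins each $\bar{f}_i$ to an affine piece with slope $\Psi$ between $x_i$ and $x^*_i$, so running $\textsc{Adjust}([r,\ldots,q], \mathbf{x}, \mathbf{x^*})$ with a sufficiently small displacement produces the required cost-neutral feasible transfer, contradicting minimality.

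The main obstacle I anticipate is bridging the two subgradient systems: since $\bar{f}^{\textsc{pl}}_i$ and $\bar{f}_i$ are distinct functions, the subgradient monotonicity used in Theorem~\ref{p1} does not carry across them automatically. The decisive observation is the inclusion $\partial \bar{f}_i(k) \subseteq \partial \bar{f}^{\textsc{pl}}_i(k) = [\bar{f}_i(k) - \bar{f}_i(k-1),\ \bar{f}_i(k+1) - \bar{f}_i(k)]$ at every integer $k$, which follows from convexity of $\bar{f}_i$. One must then verify that the common slope $\Psi$ obtained from the chained inequalities forces $\bar{f}_i$ to be affine on the \emph{entire} segment $[\min(x_i,x^*_i), \max(x_i,x^*_i)]$ rather than only at its endpoints, so that the subsequent adjustment truly leaves the objective invariant. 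Handling this carefully, together with the symmetric case $x^*_s - x_s \geq n-1$ and the sub-case $s > t$, completes the argument.
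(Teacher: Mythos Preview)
Your overall architecture matches the paper's: exploit Theorem~\ref{theo:reformulation} to get KKT conditions for $\mathbf{x^*}$ with respect to $\bar{f}^{\textsc{pl}}$, chain subgradient inequalities along an interval where complementary slackness kills the nested-constraint multipliers, deduce affineness, and transfer mass. Your use of an $\ell_1$-closest continuous optimum is a clean replacement for the paper's explicit decreasing potential.

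There is, however, a genuine gap in the order of your two steps. You first apply pigeonhole \emph{globally} to obtain some $t\neq s$ with $x^*_t - x_t \geq 1$, and only afterwards try to build an interval $[r,q]\supseteq\{s,t\}$ on which the dual multipliers can be eliminated. But the elimination on $[r,q]$ requires the partial-sum discrepancy $\sum_{k\le j}(x_k-x^*_k)$ to keep a fixed sign throughout, and nothing in your construction controls the sign on the stretch between $s$ and $t$: if it flips there, complementary slackness fails at that index and the chain (\ref{argument1})--(\ref{argument2}) breaks. The paper avoids this by reversing the order: it first fixes the interval around $s$ using the nearest sign-change points (its $r$ and $t$), which guarantees the dual elimination, and \emph{then} applies pigeonhole inside that interval. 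The counting still works because $\sum_{i\in\{r,\dots,t\}\setminus\{s\}}(x^*_i-x_i)\ge n-1$ while $|\{r,\dots,t\}\setminus\{s\}|\le n-1$, so some local index $u$ has $x^*_u - x_u \ge 1$.

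A secondary point: your ``decisive observation'' $\partial\bar f_i(k)\subseteq\partial\bar f_i^{\textsc{pl}}(k)$ at integers $k$ is true but not by itself what drives the cross-comparison. What you actually need is that a subgradient of $\bar f_i$ at a real point and a subgradient of $\bar f_i^{\textsc{pl}}$ at an integer point are ordered whenever the two points differ by at least~$1$; this is the content of the paper's Lemmas~\ref{lemma1} and~\ref{lemma2}, proved via secant-slope monotonicity. The gap of $1$ supplied by pigeonhole (and of $n-1\ge 1$ at $s$) is precisely what makes this bridging possible, and it is also what forces the equalities in the chain to propagate to affineness of $\bar f_s$ and $\bar f_u$ on the full segments $[\min(x_i,x^*_i),\max(x_i,x^*_i)]$, the point you flagged as delicate.
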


This theorem allows us to search for an $\epsilon$-approximate solution of the continuous problem by defining an integer RAP--NC in which all parameters ($a_i$, $b_i$, $c_i$, $d_i$) have been scaled by a factor $\lceil n/\epsilon \rceil$, solving this problem, and transforming back the solution.
It constitutes a special case of Theorem 1.3 from \cite{Moriguchi2011}, as the $\text{RAP--NC}$ can be shown to be a special case of resource allocation problem under submodular constraints.
Moreover, without even relying on submodular optimization arguments, this result can also be obtained directly via first-order (KKT) optimality conditions. This alternative proof shares many similarities with that of Theorem 3, and it is made available in Appendix A for the interested reader.

\subsection{Computational Complexity}
\label{section:complexity}

\noindent
\textbf{Convex objective.}
Each call to the main algorithm $\textsc{MDA}(v,w)$ involves a recursive call to $\textsc{\textsc{MDA}}(v,u) $ and $\textsc{\textsc{MDA}}(u+1,w) $ with $u = \lfloor \frac{v+w}{2} \rfloor$, as well as
\begin{itemize}[nosep]
\item the solution of $\textsc{Rap}_{v,w}(L,R,\mathbf{\bar{c}},\mathbf{\bar{d}})$ for $L \in \{a_{v-1},b_{v-1}\}$ and $R \in \{a_w,b_w\}$;
\item up to four calls to the \textsc{Adjust} function;
\item a linear number of operations to set the bounds $\bar{c}_i$ and $\bar{d}_i$.
\end{itemize}
The function $\textsc{Adjust}$ uses a number of elementary operations which grows linearly with the number of variables. Moreover, in the presence of integer variables, each RAP subproblem with $n$ variables and bound $B  = R-L$ can be solved in $\cO(n \log \frac{B}{n})$ time using the algorithm of \cite{Frederickson1982} or \cite{Hochbaum1994}. As a consequence, the number of operations $\Phi(n,m,B)$ of MDA, as a function of the number of variables $n$ and constraints $m$, is bounded as
\begin{align*}
\Phi(n,m,B) &\leq \sum_{i=1}^{h} \left( K n +  \sum_{j=1}^{2^{h-i}} 
4 K' \left( \sigma[2^i j] - \sigma[2^i (j-1)] \right) \log{\left( \frac{B} {\sigma[2^i j] - \sigma[2^i (j-1)]} \right)} \right)  \\
&\leq K n h +  4 K' n h \log B,
\end{align*}
 where $K$ and $K'$ are constants and $h = 1 +  \lceil  \log_2 m  \rceil$.
Thus, $\Phi(n,m,B) \in \cO(n \log m \log B)$ in the integer case. For the continuous case, after scaling all problem parameters by $\lceil n/\epsilon \rceil$, the complexity of the algorithm for the search for an $\epsilon-$approximate solution becomes $\cO(n \log m \log \frac{nB}{\epsilon})$.\\

\noindent
\textbf{Quadratic and linear objectives.} More efficient RAP solution methods are known for specific forms of objective functions. The quadratic RAP with continuous variables, in particular, can be solved in $\cO(n)$ time \citep{Brucker1984}. For the integer case, reviewed in \cite{Katoh2013}, an $\cO(n)$ algorithm can be derived from Section 4.6 of \cite{Ibaraki1988}. Finally, in the linear case, each RAP subproblem can be solved in $\cO(n)$ time as a weighted median problem (see, e.g., \citealt{Korte2012a}). All these cases lead to $\cO(n \log m)$ algorithms for the corresponding RAP--NC. Note that no transformation or proximity theorem is needed for the continuous quadratic RAP, since the solutions of quadratic problems are representable.

\section{Computational Experiments}
\label{section experiments}

We perform computational experiments to evaluate the performance of the proposed algorithm in the presence of a linear objective, and for two convex objective functions arising in project crashing and speed optimization applications.
For linear problems, we compare with the network flow algorithm of \cite{Ahuja2008}, which achieved the previous best-known complexity of $\cO(n \log n)$ for the problem; this complexity is slightly improved to $\cO(n \log m)$ by the proposed MDA. For general convex objectives, no dedicated algorithm is available and we compare with the interior-point-based algorithm of MOSEK v7.1 for separable convex optimization. We finally report experimental analyses to evaluate the potential of this solver within a projected gradient method for the SVOREX problem (Section~\ref{Applications}), for ordinal regression. The algorithms are implemented in C++ and executed on a single core of a Xeon 3.07\,GHz CPU.
For accurate time measurements, any algorithm with a CPU time smaller than one second was executed multiple times in a loop (up to a total time of 10 seconds) to determine the average time of a run.

We generated benchmark instances with a number of variables $n \in \{10,20,50,100, 200,\dots,10^6\}$. Overall, $10$ random benchmark instances were produced for each problem size, leading to a total of 16$\times$10 instances with the same number of nested constraints as decision variables ($n=m$). For fine-grained complexity analyses in the case of the linear objective, we also removed random nested constraints to produce an additional set of 13$\times$10 instances with $m=100$ constraints and $n \in \{100,200,500,\dots,10^6\}$. 
For each instance, we generated the parameters $c_i$ and $d_i$ for $i \in \{1,\dots,n\}$ from uniform distributions in the range $[0.1,0.5]$ and $[0.5,0.9]$, respectively. Then, we defined two sequences of values $v_i$ and $w_i$, such that $v_0 = w_0 = 0$, $v_i = v_{i-1} + X^v_i$, and $w_i = w_{i-1} + X^w_i$ for $i \in \{1,\dots,n\}$, where $X^v_i$ and $X^w_i$ are random variables drawn from a uniform distribution in the range $[c_i,d_i]$. Finally, we set $a_i = \min \{v_i,w_i\}$ and $b_i = \max \{v_i,w_i\}$ for all $i$. We also selected a random parameter~$p_i$ in $[0,1]$ to characterize the objective function.
We conducted the experiments with four classes of objectives: a linear objective $\sum_{i=1}^n p_i x_i$, and three convex objectives defined as:
\begin{align}
[\text{F}]& \hspace*{1cm} f_i(x) = \frac{x^4}{4} +  p_i x,  \label{cost-F} \\
[\text{Crash}]& \hspace*{1cm} f_i(x) = k_i +  \frac{p_i}{x},  \label{cost-pert} \\
\text{and } [\text{Fuel}]& \hspace*{1cm} f_i(x) = p_i \times c_i \times \left(  \frac{c_i}{x} \right)^3, \label{vessel-fuel}
\end{align}
where the last two objectives are representative of applications in project crashing \citep{Foldes1993} and ship speed optimization~\citep{Ronen1982}.

\subsection{Linear Objective}
\label{linear}

We start the experimental analyses with the linear RAP--NC. We will refer to the network-flow-based approach of \cite{Ahuja2008} as ``FLOW'' in the text and tables. This method was precisely described in the original article, but no computational experiments or practical implementation were reported, so we had to implement it. The authors suggest the use of a red-black tree to locate the minimum-cost paths and a dynamic tree \citep{Tarjan1997,Tarjan2009} to manage the capacity constraints. This advanced data structure requires significant implementation effort and can result in high CPU time constants. We thus adopted a simpler structure, a segment tree \citep{Bentley1977} with lazy propagation, which allows evaluating and updating these capacities with the same complexity of $\cO(\log n)$ per operation (and possibly a higher speed in practice). The proposed MDA was implemented as in Algorithm \ref{algo-subprobSC}, solving each linear RAP subproblem in $\cO(n)$ time as a variant of a weighted median problem \citep{Korte2012a}.

We executed both algorithms on each instance.
The results for the instances with as many nested constraints as decision variables ($n=m$) are reported in Figure \ref{boxplot-time-n}. To evaluate the growth of the computational effort of the algorithms as a function of problem size, we fitted the computational time as a power law $f(n) = \alpha \times n^\beta$ of the number of variables $n$, via a least-squares regression of an affine function on the log-log graph (left figure). We also display as boxplots the ratio of the computational time of MDA and FLOW (right figure). The same conventions are used to display the results of the experimental analyses with a fixed number of constraints ($m=100$) and increasing number of variables $n$ in Figure \ref{boxplot-time-m}. Finally, the detailed average computational times for each group of 10 instances are reported in Table \ref{detailed-n}.

\begin{figure}[htbp]
\hspace*{-0.5cm}
\begin{minipage}[b]{7cm}
\hspace*{0.3cm} \begin{scriptsize}Time(s)\end{scriptsize}

\includegraphics[height=5.42cm]{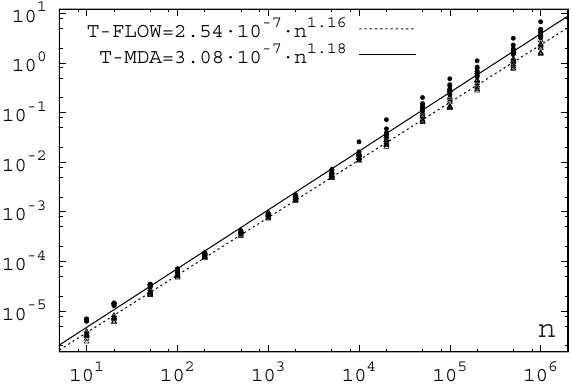}
\end{minipage} \hspace*{1.5cm}
\begin{minipage}[b]{7cm}
\begin{scriptsize}$T_\textsc{FLOW}/T_\textsc{MDA}$\end{scriptsize}

\includegraphics[height=5.42cm]{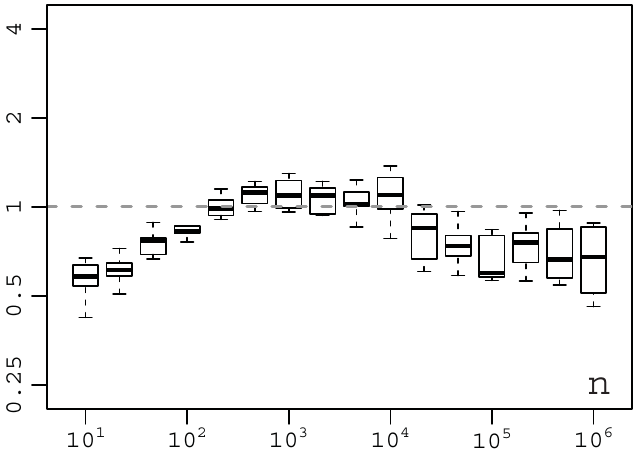}
\end{minipage}
\vspace*{-0.05cm}
\caption{Varying $n \in \{10,\dots,10^6\}$ and $m=n$. Left figure: CPU time of both methods as $n$ and $m$ grow. Right figure: Boxplots of the ratio $T_\textsc{FLOW}/T_\textsc{MDA}$.}
\label{boxplot-time-n}
\end{figure}

\begin{figure}[htbp]
\vspace*{-0.7cm}
\hspace*{-0.5cm}
\begin{minipage}[b]{7cm}
\hspace*{0.3cm} \begin{scriptsize}Time(s)\end{scriptsize}

\includegraphics[height=5.43cm]{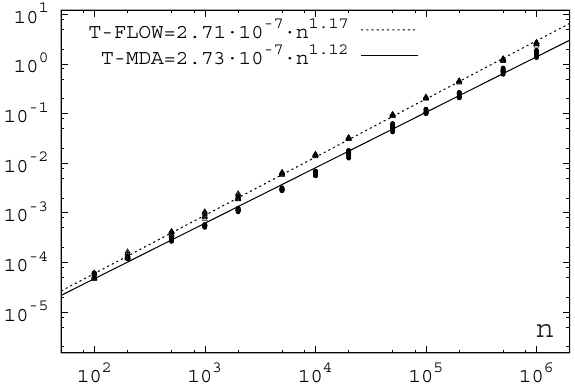}
\end{minipage} \hspace*{1.5cm}
\begin{minipage}[b]{7cm}
\begin{scriptsize}$T_\textsc{FLOW}/T_\textsc{MDA}$\end{scriptsize}

\includegraphics[height=5.42cm]{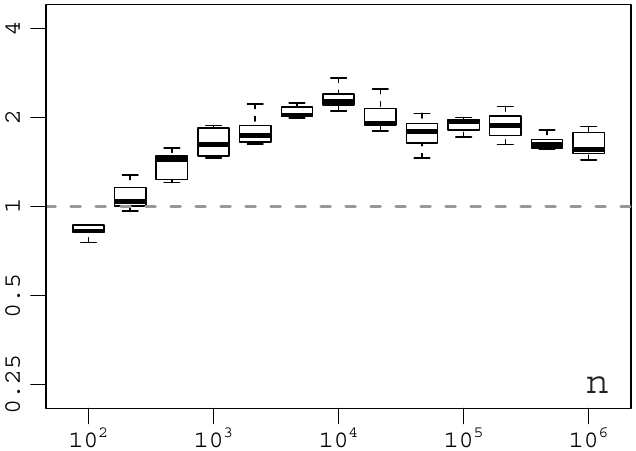}
\end{minipage}
\caption{Linear Objective. Varying $n \in \{10,\dots,10^6\}$ and fixed $m=100$. Left figure: CPU time of both methods as $n$ grows. Right figure: Boxplots of the ratio $T_\textsc{FLOW}/T_\textsc{MDA}$.}
\label{boxplot-time-m}
\end{figure}

\begingroup
\renewcommand{\arraystretch}{1.2}
\begin{table}[htbp]
\begin{center}
\caption{Detailed average CPU times for experiments with a linear objective}
\label{detailed-n}
\scalebox{0.78}
{
\setlength{\tabcolsep}{12pt}
\hspace*{-0.4cm}
\begin{tabular}{|ll|c|c| c |ll|c|c|}
\cline{1-4} \cline{6-9}
\multicolumn{2}{|c|}{\strut\vspace*{-0.1cm}}&\multicolumn{2}{c|}{\strut\vspace*{-0.1cm}}&&\multicolumn{2}{c|}{\strut\vspace*{-0.1cm}}&\multicolumn{2}{c|}{\strut\vspace*{-0.1cm}}\\
\multicolumn{2}{|c|}{\textbf{Variable} $\mathbf{m}$}&\multicolumn{2}{c|}{\textbf{CPU Time(s)}}&&\multicolumn{2}{c|}{\textbf{Fixed} $\mathbf{m}$}&\multicolumn{2}{c|}{\textbf{CPU Time(s)}} \\
$n$&$m$&FLOW&MDA&& $n$&$m$&FLOW&MDA \\
\cline{1-4} \cline{6-9}
\multicolumn{2}{|c|}{\strut\vspace*{-0.2cm}}&&&&\multicolumn{2}{c|}{\strut\vspace*{-0.2cm}}&&\\
10&10&\num{0.000002753}&\num{0.000004784}&&100&100&\num{0.000050934}&\num{0.000059476}\\
20&20&\num{0.000006262}&\num{0.000010242}&&200&100&\num{0.000135978}&\num{0.000126415}\\
50&50&\num{0.000021454}&\num{0.000028513}&&500&100&\num{0.000394375}&\num{0.000285976}\\
100&100&\num{0.000050576}&\num{0.000058915}&&1000&100&\num{0.000907237}&\num{0.000551754}\\
200&200&\num{0.000126376}&\num{0.000125758}&&2000&100&\num{0.002065989}&\num{0.001139486}\\
500&500&\num{0.000371994}&\num{0.00033622}&&5000&100&\num{0.006162977}&\num{0.002958873}\\
1000&1000&\num{0.000842986}&\num{0.000757082}&&10000&100&\num{0.0144292}&\num{0.006262521}\\
2000&2000&\num{0.001865427}&\num{0.001740006}&&20000&100&\num{0.031677652}&\num{0.01571567}\\
5000&5000&\num{0.005426528}&\num{0.005199759}&&50000&100&\num{0.092668418}&\num{0.052614627}\\
10000&10000&\num{0.012305958}&\num{0.011198173}&&100000&100&\num{0.203549512}&\num{0.108438407}\\
20000&20000&\num{0.026222298}&\num{0.032138957}&&200000&100&\num{0.441196771}&\num{0.235623163}\\
50000&50000&\num{0.079430967}&\num{0.105266726}&&500000&100&\num{1.19560909}&\num{0.718873748}\\
100000&100000&\num{0.151968169}&\num{0.225724083}&&1000000&100&\num{2.564233333}&\num{1.598551587}\\
\cline{6-9}
200000&200000&\num{0.36655844}&\num{0.485572598}\\
500000&500000&\num{0.967679801}&\num{1.369780555}\\
1000000&1000000&\num{1.989558333}&\num{2.9783}\\
\cline{1-4}
\end{tabular}
}
\end{center}
\end{table}
\endgroup

From these experiments, we observe that the computational times of the two methods are very similar in terms of magnitude and growth rate. When $m=n$, the algorithms have the same theoretical complexity of $\cO(n \log n)$, as confirmed by the power law regression, with an observed growth that is close to linear (in $n^{1.16}$ and $n^{1.18}$). The FLOW algorithm is on average $1.1\times$ to $1.4\times$ faster than MDA for $n \in [10,100] \cup [10^4,10^6]$, for instances with the same number of variables and constraints. On the other hand, MDA is on average $2\times$ faster than FLOW when $m$ is fixed and $n$ grows beyond~$1000$. This is due to the difference in computational complexity: $\cO(n \log m)$ for MDA instead of $\cO(n \log n)$. MDA and FLOW solve the largest instances, with up to $n=m=10^6$ constraints and variables, in three and two seconds on average, respectively.

Overall, the two algorithms have similar performance for linear objectives, and the CPU differences are small. Since these algorithms are based on drastically different principles, they lead the way to different methodological extensions. The computational complexity of FLOW is tied to its efficient use of a dynamic tree data structure, while the complexity of the MDA stems from its ``monotonic'' divide-and-conquer strategy. Because of this structure, MDA should be a good choice for re-optimization after a change of a few parameters, as well as for the iterative solution of multiple RAP--NC, e.g., for speed optimization within an algorithm enumerating a large number of similar visit sequences, since it can reuse the solutions of smaller subproblems (see, e.g., \citealt{Norstad2010} and \citealt{Vidal2012b,Vidal2015b}).

\subsection{Separable Convex Objectives} 
\label{convex}

In contrast with the case of linear objective functions, no specialized algorithm has been designed for the RAP--NC with separable convex costs to date. To illustrate the possible gain achieved by the use of a dedicated algorithm rather than a general-purpose solver, we do a simple comparison of the CPU time of MDA to that of the MOSEK v7.1 solver on two sets of instances with objective functions derived from project crashing and speed optimization applications. MOSEK is based on an interior-point method and is a good representative of the current generation of separable convex optimization solvers. We set a time limit of one hour. To simplify the execution of these experiments, we use a binary search over the single dual variable to solve each continuous RAP subproblem \citep{Patriksson2008} within a precision of $10^{-9}$. As these approximations may stack up $\log m$ times in the recursion, we obtain an overall good accuracy but do not guarantee $\epsilon$-proximity~in~these~tests.

\begin{figure}[htbp]
\hspace*{-0.5cm}

\begin{minipage}[b]{7cm}
\hspace*{0.3cm} \begin{scriptsize}Time(s)\end{scriptsize} \hspace*{2.5cm}[F]

\includegraphics[height=5.35cm]{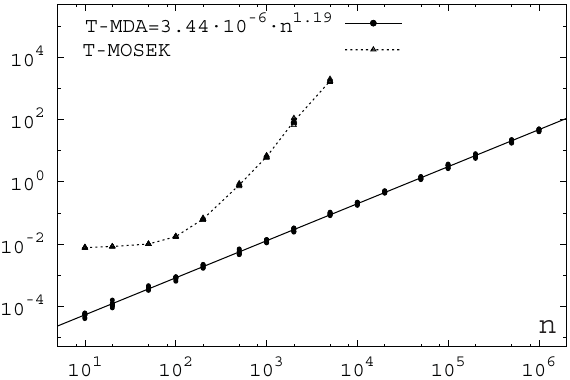}
\end{minipage} \hspace*{1.4cm}
\begin{minipage}[b]{7cm}
\hspace*{0.3cm} \begin{scriptsize}Time(s)\end{scriptsize} \hspace*{2.2cm}[Crash]

\includegraphics[height=5.35cm]{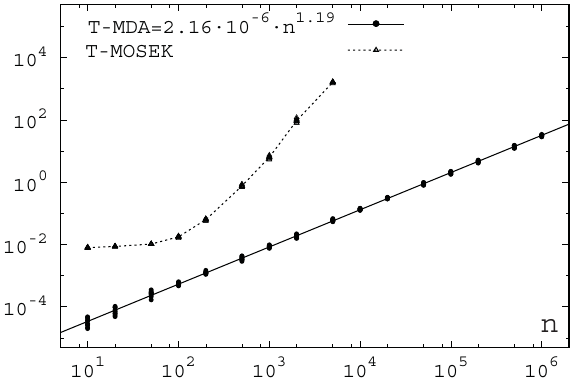}
\end{minipage}
\vspace*{0.25cm}

\begin{minipage}[b]{7cm}
\hspace*{0.3cm} \begin{scriptsize}Time(s)\end{scriptsize} \hspace*{2.2cm}[Fuel]

\includegraphics[height=5.35cm]{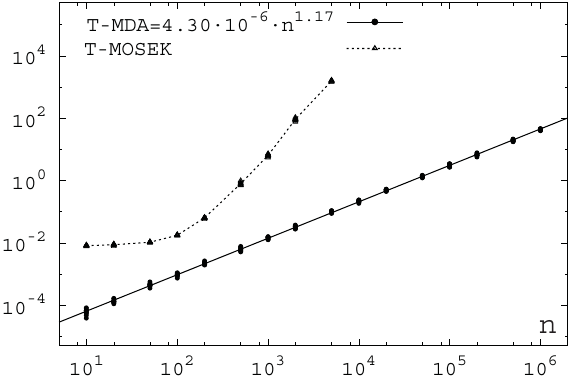}
\end{minipage} \hspace*{1.72cm}
\begin{minipage}[b]{7cm}
\begin{scriptsize}$T_\textsc{Mosek}/T_\textsc{MDA}$\end{scriptsize}

\includegraphics[height=5.5cm]{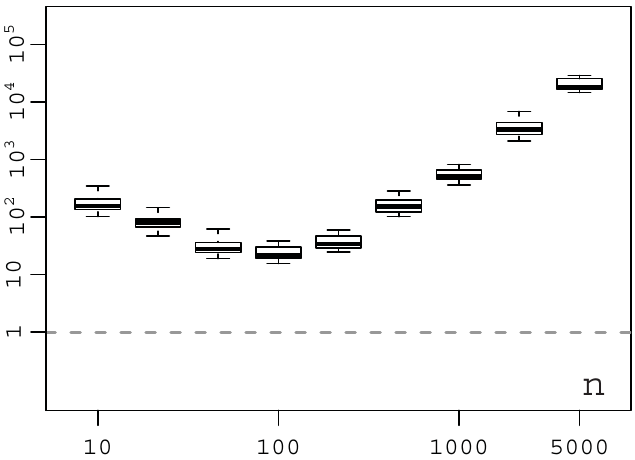}
\end{minipage}
\vspace*{0.2cm}
\caption{Convex Objective. From left to right and top to bottom: CPU time of both methods as $n$ grows and $m=n$ for the objectives [F], [Crash], and [Fuel]. Bottom right figure: Boxplots of the ratio $T_\textsc{Mosek}/T_\textsc{MDA}$.}
\label{boxplot-time-convex}
\end{figure}

\begingroup
\setlength{\medmuskip}{0mu}
\renewcommand{\arraystretch}{1.2}
\begin{table}[htbp]
\begin{center}
\caption{Detailed average CPU-time for experiments with a separable convex objective}
\label{detailed-n-convex}
\scalebox{0.8}
{
\setlength{\tabcolsep}{12pt}
\begin{tabular}{|ll|ccc|ccc|}
\hline
&&\multicolumn{3}{c|}{\vspace*{-0.2cm}}&\multicolumn{3}{c|}{\vspace*{-0.2cm}} \\
&&\multicolumn{3}{c|}{\textbf{CPU Time(s) -- MDA}}&\multicolumn{3}{c|}{\textbf{CPU Time(s) -- MOSEK}} \\
&&\multicolumn{3}{c|}{\vspace*{-0.25cm}}&\multicolumn{3}{c|}{\vspace*{-0.25cm}} \\
$n$&$m$&[F]&[Crash]&[Fuel]&[F]&[Crash]&[Fuel] \\
&&\multicolumn{3}{c|}{\vspace*{-0.25cm}}&\multicolumn{3}{c|}{\vspace*{-0.25cm}} \\
\hline
&&\multicolumn{3}{c|}{\vspace*{-0.2cm}}&\multicolumn{3}{c|}{\vspace*{-0.2cm}} \\
10&10&\num{0.000052804}&\num{0.000032711}&\num{0.000061129}&\num{0.007686943}&\num{0.007832116}&\num{0.008061434}\\
20&20&\num{0.000114081}&\num{0.000073175}&\num{0.000133301}&\num{0.008274424}&\num{0.008604278}&\num{0.008640814}\\
50&50&\num{0.000380328}&\num{0.000263063}&\num{0.000445094}&\num{0.009952053}&\num{0.010301065}&\num{0.010400973}\\
100&100&\num{0.000803668}&\num{0.000538618}&\num{0.000930292}&\num{0.017299139}&\num{0.017526892}&\num{0.01740955}\\
200&200&\num{0.001934188}&\num{0.001231371}&\num{0.002160542}&\num{0.063120002}&\num{0.062191123}&\num{0.062980577}\\
500&500&\num{0.005448312}&\num{0.003550539}&\num{0.006211667}&\num{0.778735352}&\num{0.756403467}&\num{0.785640721}\\
1000&1000&\num{0.012697424}&\num{0.008608136}&\num{0.014253454}&\num{6.314833334}&\num{6.293333333}&\num{6.368833333}\\
2000&2000&\num{0.028767787}&\num{0.018728026}&\num{0.031944613}&\num{85.749}&\num{93.761}&\num{90.474}\\
5000&5000&\num{0.092704885}&\num{0.060521584}&\num{0.098564133}&\num{1703.926}&\num{1605.518}&\num{1554.495}\\
10000&10000&\num{0.201499218}&\num{0.133855399}&\num{0.212543624}&---&---&---\\
20000&20000&\num{0.469045625}&\num{0.304190384}&\num{0.482383221}&---&---&---\\
50000&50000&\num{1.305475757}&\num{0.873814239}&\num{1.327877777}&---&---&---\\
100000&100000&\num{3.1194}&\num{2.023380952}&\num{3.06865}&---&---&---\\
200000&200000&\num{6.683166667}&\num{4.584}&\num{6.613666667}&---&---&---\\
500000&500000&\num{19.838}&\num{13.538}&\num{19.101}&---&---&---\\
1000000&1000000&\num{45.403}&\num{31.004}&\num{42.991}&---&---&---\\
\hline
\end{tabular}
}
\end{center}
\end{table}
\endgroup 

The results are reported in Figure \ref{boxplot-time-convex} and Table \ref{detailed-n-convex}. In the figure, the power-law regressions are presented only for MDA, since MOSEK does not exhibit polynomial behavior, likely due to the computational effort related to the initialization of the solver for  small problems. In contrast, the computational time of MDA grows steadily in $\cO(n^{1.19})$ at most. This observation is consistent with the theoretical $\cO(n \log m)$ complexity of the method.
Within one hour, MOSEK solved all the instances up to $n = 5,000$ decision variables. In contrast, MDA solved all the available instances with up to one million variables. For the largest benchmark instances, the CPU time of the method did not exceed $50$ seconds.
As illustrated in the bottom-right subfigure, the ratio of the CPU time of MOSEK and MDA ranges between 16 and 28,000. For all the instances, significant CPU time is saved when using the monotonic divide-and-conquer algorithm instead of a general-purpose~solver.

\subsection{Non-Separable Convex Objective -- Support Vector Ordinal Regression} 
\label{ordinal}

Our last experimental analysis is concerned with the SVOREX model, presented in Section~\ref{Applications}. It is a non-separable convex optimization problem over a special case of the RAP--NC constraint polytope. The current state-of-the-art algorithm for this problem, proposed by \cite{Chu2007a}, is based on a working-set decomposition. Iteratively, a set of variables is selected to be optimized over, while the others remain fixed. This approach leads to a (non-separable) restricted problem with fewer variables which can be solved to optimality.
The authors rely on a \emph{minimal working set} containing the two variables which most violate the KKT conditions (see \citealt{Chu2007a}, pp. 799--800, for all equations involved).

The advantage of a minimal working set comes from the availability of analytical solutions for the restricted problems.
On the other hand, larger working sets can be beneficial in order to reduce the number of iterations until convergence (see, e.g., \citealt{Joachims1999}). However this would require an efficient method for the resolution of the reduced problems. This is how the proposed RAP--NC solver can provide a meaningful option along this direction. In order to evaluate such a proof of concept, we conduct a simple experiment which consists of generating larger working sets within the approach of \cite{Chu2007a} and solving the resulting reduced problems with the help of the RAP--NC algorithm. As these reduced problems are non-separable convex, the RAP--NC algorithm is being used for the projection steps within a projected gradient descent procedure. The overall solution approach is summarized in Algorithm \ref{algo:WSB}, in which $\mathcal{W}$ is the working set, $z$ is the objective function, and $\gamma$ is the fixed step size of the gradient descent.

\begin{algorithm}[htbp]
\linespread{1.2}\selectfont

$\boldsymbol\alpha = \boldsymbol\alpha^* = \mathbf{0}$ \tcp*{Initial Solution set to 0}

\While{there exists samples that violate the KKT conditions}
{
Select a working set $\mathcal{W}$ of maximum size $n_\textsc{ws}$ 

\For{$n_\textsc{grad}$ iterations}
{

 \tcp{Take a step}

\For{$j \in \{1,\dots,r\}$ and $i \in \{1,\dots,n^j\}$}
{

$\hat{\alpha}_i^{j}  = 
\begin{cases}
 \alpha_i^{j} + \gamma  \frac{\partial z}{\partial \alpha_i^{j} } & \text{if } (i,j)  \in \mathcal{W}  \\
 \alpha_i^{j} &  \text{otherwise}
\end{cases}$ \hspace*{0.2cm} ; \hspace*{0.2cm}
$\hat{\alpha}_i^{*j}  = 
\begin{cases}
 \alpha_i^{*j} + \gamma  \frac{\partial z}{\partial \alpha_i^{*j} } &\text{if } (i,j)  \in \mathcal{W}  \\
 \alpha_i^{*j} &  \text{otherwise}
\end{cases}$
}

 \tcp{Solve the projection subproblem as a RAP-NC}

\vspace*{-0.5cm}

\begin{equation*}
(\boldsymbol\alpha,\boldsymbol\alpha^*) \gets 
\begin{cases}
\min\limits_{\boldsymbol\alpha,\boldsymbol\alpha^*} & \sum\limits_{(i,j) \in \mathcal{W}}  \left( (\alpha_i^j -  \hat{\alpha}_i^j )^2 + (\alpha_i^{*j} -  \hat{\alpha}_i^{*j} )^2 \right) \\
\text{ s.t.} & \text{Equations (\ref{SVOR:2})--(\ref{SVOR:5})} \\
& \alpha_i^j  = \hat{\alpha}_i^j \text{ and }   \alpha_i^{*j}  = \hat{\alpha}_i^{*j} \text{  \hspace*{0.15cm} if } (i,j) \notin \mathcal{W} \\
\end{cases}
\end{equation*}
}
}

 \caption{Solving SVOREX via RAP-NC subproblems} \label{algo2} 
 \label{algo:WSB} 
\end{algorithm}

To obtain a larger working set,
we repeatedly select the most-violated sample pair
until either reaching the desired size or not finding any remaining violation.
In our experiments, we consider working sets of size $n_\textsc{ws} \in \{2,4,6,10\}$, a step size of $\gamma = 0.2$ and $n_\textsc{grad} = 20$ iterations for the projected gradient descent.
We use the eight problem instances introduced in \cite{Chu2007a}, with the same Gaussian kernel, penalty parameter, and guidelines for data preparation (normalizing the input vectors to zero mean and unit variance, and using equal-frequency binning to discretize the target values into five ordinal scales).

Table \ref{Results-SVOREX} gives the results of these experiments. The columns report, in turn, the problem instance name, its number of samples $N$, the dimension $D$ of its feature space, and characteristics of the optimal solutions: the number of variables set to~$0$ (correct classification), to $C$ (misclassified), and to intermediate values (support vectors). For each working-set size $n_\textsc{ws}$, the total number of working set selections $I_\textsc{ws}$ done by the algorithm is also presented, as well as the CPU time in seconds.
The fastest algorithm version is underlined for each instance.

\begingroup
\renewcommand{\arraystretch}{1.1}
\begin{table}[!h]
\begin{center}
\caption{SVOREX resolution -- impact of the working-set size and solution features}
\label{Results-SVOREX}
\scalebox{0.75}
{
\setlength{\tabcolsep}{10pt}
\begin{tabular}{|lcc|ccc|rrr|}
\multicolumn{9}{c}{\vspace*{-0.45cm}}\\
\hline
&&&&&&&&\vspace*{-0.35cm}\\
\multirow{2}{*}{\textbf{Instance}}&\multirow{2}{*}{$\mathbf{N}$}&\multirow{2}{*}{$\mathbf{D}$}&\multicolumn{3}{c|}{\textbf{Solution Variables s.t.}}&\multirow{2}{*}{$\mathbf{n_\textsc{ws}}$}&\multirow{2}{*}{$\mathbf{I_{ws}}$}&\multirow{2}{*}{\textbf{T(s)}}\\
&&&$\alpha = 0$&$\alpha = C$&$\alpha \in ]0,C[$&&&\\
&&&&&&&&\vspace*{-0.5cm}\\
\hline
&&&&&&&&\vspace*{-0.5cm}\\
\multirow{4}{*}{Abalone} &\multirow{4}{*}{1000}&\multirow{4}{*}{8}&\multirow{4}{*}{39\%}&\multirow{4}{*}{32\%}&\multirow{4}{*}{29\%}&\textbf{\underline{2}}&\textbf{\underline{118233}}&\textbf{\underline{13.46}}\\
&&&&&&4&96673&21.51\\
&&&&&&6&78433&26.34\\
&&&&&&10&60605&35.46\\
&&&&&&&&\vspace*{-0.5cm}\\
\hline
&&&&&&&&\vspace*{-0.5cm}\\
\multirow{4}{*}{Bank}&\multirow{4}{*}{3000}&\multirow{4}{*}{32}&\multirow{4}{*}{25\%}&\multirow{4}{*}{0\%}&\multirow{4}{*}{75\%}&2&139468&68.41\\
&&&&&&4&52073&63.02\\
&&&&&&\textbf{\underline{6}}&\textbf{\underline{31452}}&\textbf{\underline{45.22}}\\
&&&&&&10&21310&47.66\\
&&&&&&&&\vspace*{-0.5cm}\\
\hline
&&&&&&&&\vspace*{-0.5cm}\\
\multirow{4}{*}{Boston}&\multirow{4}{*}{300}&\multirow{4}{*}{13}&\multirow{4}{*}{41\%}&\multirow{4}{*}{0\%}&\multirow{4}{*}{59\%}&2&7207&0.43\\
&&&&&&\textbf{\underline{4}}&\textbf{\underline{3697}}&\textbf{\underline{0.40}}\\
&&&&&&6&2840&0.46\\
&&&&&&10&2076&0.54\\
&&&&&&&&\vspace*{-0.5cm}\\
\hline
&&&&&&&&\vspace*{-0.4cm}\\
\multirow{4}{*}{California} &\multirow{4}{*}{5000}&\multirow{4}{*}{8}&\multirow{4}{*}{51\%}&\multirow{4}{*}{43\%}&\multirow{4}{*}{6\%}&\textbf{\underline{2}}&\textbf{\underline{250720}}&\textbf{\underline{124.46}}\\
&&&&&&4&189289&185.79\\
&&&&&&6&166879&245.08\\
&&&&&&10&146170&360.52\\
&&&&&&&&\vspace*{-0.5cm}\\
\hline
&&&&&&&&\vspace*{-0.5cm}\\
\multirow{4}{*}{Census} &\multirow{4}{*}{6000}&\multirow{4}{*}{16}&\multirow{4}{*}{38\%}&\multirow{4}{*}{4\%}&\multirow{4}{*}{59\%}&\textbf{\underline{2}}&\textbf{\underline{349894}}&\textbf{\underline{242.11}}\\
&&&&&&4&206951&301.74\\
&&&&&&6&180608&393.28\\
&&&&&&10&155731&574.28\\
&&&&&&&&\vspace*{-0.5cm}\\
\hline
&&&&&&&&\vspace*{-0.5cm}\\
\multirow{4}{*}{Computer}&\multirow{4}{*}{4000}&\multirow{4}{*}{21}&\multirow{4}{*}{64\%}&\multirow{4}{*}{32\%}&\multirow{4}{*}{4\%}&2&290207&168.94\\
&&&&&&4&140270&161.45\\
&&&&&&\textbf{\underline{6}}&\textbf{\underline{98948}}&\textbf{\underline{153.56}}\\
&&&&&&10&68616&193.10\\
&&&&&&&&\vspace*{-0.5cm}\\
\hline
&&&&&&&&\vspace*{-0.5cm}\\
\multirow{4}{*}{Machine CPU}&\multirow{4}{*}{150}&\multirow{4}{*}{6}&\multirow{4}{*}{49\%}&\multirow{4}{*}{9\%}&\multirow{4}{*}{41\%}&2&28856&1.24\\
&&&&&&\textbf{\underline{4}}&\textbf{\underline{11534}}&\textbf{\underline{0.86}}\\
&&&&&&6&8144&0.91\\
&&&&&&10&6363&1.24\\
&&&&&&&&\vspace*{-0.5cm}\\
\hline
&&&&&&&&\vspace*{-0.5cm}\\
\multirow{4}{*}{Pyrimidines}&\multirow{4}{*}{50}&\multirow{4}{*}{27}&\multirow{4}{*}{21\%}&\multirow{4}{*}{0\%}&\multirow{4}{*}{79\%}&2&935&0.035\\
&&&&&&4&367&0.021\\
&&&&&&\textbf{\underline{6}}&\textbf{\underline{218}}&\textbf{\underline{0.018}}\\
&&&&&&10&144&0.023\\
\hline
\end{tabular}
}
\end{center}
\end{table}
\endgroup

As measured in these experiments, the CPU time of the algorithms ranges between 0.018 seconds for the smallest problem instances (with 50 samples and 27 dimensions) and 574.28 seconds for the largest case (6000 samples and 16 dimensions). The size of the working set has a significant impact on the number of iterations of the method and its CPU time.

In all cases, the number of iterations decreases significantly when the size of the working set grows. In terms of CPU time, the fastest results are either achieved with a working set of size two or six (with three instances in each case). We observe that the three instances for which a larger working set contributed to CPU-time reductions are those with higher-dimension feature spaces (dimension 21 to 32). For these instances, using a larger working set helped reduce the CPU time by a factor of $1.1$ to $1.9$ as compared to using a two-samples working set. In comparison, \cite{Joachims1999} reported a speedup of $1.5$ to $2.0$ when using ten-samples working sets for SVM, which can be viewed as a special case of SVOREX with two classes.

To achieve a gain in CPU time, the number of iterations should decrease more than linearly as a function of the working set size. This is due to the effort spent updating the gradient, necessary for the verification of the KKT conditions and the working-set selection, which grows linearly with the product $I_\textsc{ws} \times N \times n_\textsc{ws}$ (using efficient incremental updates), and which remains a major bottleneck for SVOREX and SVM algorithms (see, e.g., the discussions in \citealt{Joachims1999} and \citealt{Chang2013}). Usually, instances with a feature space of high dimension exhibit a fast decrease in the number of iterations as a function of the working set size, as their solutions include a larger proportion of variables taking values in $(0,C)$  (support vectors), values which are more quickly reached via simultaneous optimizations of several variables. As such, larger working sets are likely to be more useful in feature spaces of higher dimension.

Moreover, future research avenues concern possible improvements of the algorithm (e.g., using shrinking or a double-loop scheme -- \citealt{Keerthi2001}), adaptive choices of working-set size based on analyses of the structure of the data set and solutions, as well as more advanced selection rules, e.g., based on Zoutendijk's descent direction \citep{Joachims1999} or second order information \citep{Lin2005}. These options for improvement are now possible due to the availability of a fast algorithm for the resolution of the restricted problems.

\section{Concluding Remarks}
\label{sec:conclusions}

In this article, we have highlighted the importance of the RAP--NC, which is a problem connected with a wide range of applications in production and transportation optimization, portfolio management, sampling optimization, telecommunications and machine learning.
To solve this problem, we proposed a decomposition algorithm, based on monotonicity principles coupled with divide-and-conquer, leading to new complexity breakthroughs for a variety of objectives (linear, quadratic, and convex), with continuous or integer variables, and to the first known strongly polynomial algorithm for the quadratic integer RAP--NC. In terms of practical performance, the algorithm matches the best dedicated (flow-based) algorithm for the linear case, outperforms general-purpose solvers by several orders of magnitude in the convex case, and opens interesting perspectives of algorithmic improvements for the SVOREX problem in machine learning.

The algorithm can be seen as a generalization of the method of \cite{Vidal2014a}, with some fundamental differences related to the number and the nature of the subproblems. It is not based on classical greedy steps and scaling, or on flow propagation techniques, often exploited for this problem family. It is an important research question to see how far this decomposition technique can be generalized. In particular, the approach can very likely be extended to the resource allocation problem with a TREE of lower \emph{and upper} constraints \citep{Hochbaum1994}. Other optimization problems related to PERT (Program Evaluation and Review Technique) may exhibit monotonicity properties as a function of time constraints or budget bounds, and we should investigate how to decompose efficiently their variables and constraints while maintaining a low computational complexity. Similarly, extended formulations involving the intersection of two or more RAP--NC type of constraint polytopes deserve a closer look. These are all open important research directions which can be explored in the near future.

\section*{Acknowledgments}

This research was partially supported by the National Counsel of Technological and Scientific Development and \emph{Funda\c{c}\~ao de Amparo \`a Pesquisa do Estado do Rio de Janeiro} (FAPERJ) in Brazil, grants 308498/2015-1 and E-26/203.310/2016, and by the Office of Naval Research (ONR), USA, grant N00014-15-1-2083.

\section*{Appendix A -- Proof of Theorem \ref{proximity theorem}, based on KKT conditions}

\noindent
\proof
\begin{leftbar}
The proof shares many similarities with that of Theorem \ref{p1}. It exploits the fact that an integer solution $\mathbf{x^*}$ of the $\text{RAP--NC}^\textsc{pl}$ is also an optimal solution of the continuous problem~(Theorem~\ref{theo:reformulation}) and thus satisfies the KKT conditions of Equations (\ref{KKT3})--(\ref{KKT5}) based on the functions~$f_i^\textsc{pl}$. We first state two lemmas, that will be used later to link the values of the subderivatives of $f_i$ and $f_i^\textsc{pl}$.

\begin{lemma}
\label{lemma1}
Consider $y \in \mathbb{R}$ and $x \in \mathbb{Z}$ such that $y + 1\leq x$, and a convex function $f$. If $\phi_y \in \partial f(y)$ and $\phi_x \in \partial f^\textsc{pl}(x)$, then $\phi_y \leq \phi_x$.
\end{lemma}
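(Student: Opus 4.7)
The plan is to sandwich $\phi_y$ and $\phi_x$ around the scalar $f(x) - f(x-1)$, i.e.\ the slope of $f^\textsc{pl}$ on the piece immediately to the left of the integer point $x$. Concretely, I will show $\phi_y \leq f(x) - f(x-1) \leq \phi_x$ and then conclude.

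First I would pin down $\phi_x \geq f(x) - f(x-1)$. Since $f$ is convex, the integer-secant slopes $f(k+1) - f(k)$ are non-decreasing in $k \in \mathbb{Z}$, so $f^\textsc{pl}$ is itself convex and piecewise linear with breakpoints at the integers. Its subderivative at the integer point $x$ is therefore the closed interval $[f(x) - f(x-1),\, f(x+1) - f(x)]$ determined by the left and right linear pieces, and any $\phi_x$ in this set satisfies $\phi_x \geq f(x) - f(x-1)$.

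Next I would establish $\phi_y \leq f(x) - f(x-1)$. If $y = x-1$, this is immediate: $\phi_y \in \partial f(x-1)$, and the subgradient inequality at $x-1$ evaluated at $x$ gives $f(x) \geq f(x-1) + \phi_y$. Otherwise $y < x-1$, and applying the subgradient inequality at $y$ to the point $z = x-1$ yields $\phi_y \leq (f(x-1) - f(y))/((x-1) - y)$. By convexity of $f$, this secant slope over $[y, x-1]$ is bounded above by the secant slope of $f$ over $[x-1, x]$, which is exactly $f(x) - f(x-1)$.

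Chaining the two bounds gives $\phi_y \leq f(x) - f(x-1) \leq \phi_x$, which is the claim. I don't anticipate any real obstacle: the argument uses only convexity of $f$ and the piecewise-linear structure of $f^\textsc{pl}$, and in particular avoids any appeal to differentiability. The only subtlety is the boundary case $y = x-1$, handled directly by the subgradient inequality; the hypothesis $y + 1 \leq x$ is precisely what guarantees the monotone chain of secant slopes $y \to x-1 \to x$ can be set up.
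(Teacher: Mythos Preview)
Your proof is correct and follows essentially the same approach as the paper: both arguments sandwich $\phi_y$ and $\phi_x$ around the secant slope $f(x)-f(x-1)$ using the subgradient inequalities and monotonicity of secant slopes. The only cosmetic difference is that the paper applies the subgradient inequality for $\phi_y$ at the point $x$ rather than at $x-1$, which gives $\phi_y \leq \tfrac{f(x)-f(y)}{x-y} \leq f(x)-f(x-1)$ uniformly and avoids your (harmless) case split at $y=x-1$.
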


\noindent
\emph{Proof of Lemma \ref{lemma1}.}
By definition,
$f(x) - f(y) \geq \phi_y (x - y)$ and 
$f^\textsc{pl}(x-1) - f^\textsc{pl}(x) \geq \phi_x (x - 1 - x) = - \phi_x$.
Moreover, $y \leq x-1 \leq x$, $f$ is convex, and $f$ coincides with $f^\textsc{pl}$ at $x$ and $x-1$, so
$$\phi_y \leq \frac{f(x) - f(y)}{x-y} \leq \frac{f(x) - f(x-1)}{x- (x -1)} =  f^\textsc{pl}(x) - f^\textsc{pl}(x-1) \leq \phi_x.$$

\begin{lemma}
\label{lemma2}
Consider $y \in \mathbb{Z}$ and $x \in \mathbb{R}$ such that $y + 1\leq x$, and a convex function $f$. If $\phi_y \in \partial f^\textsc{pl}(y)$ and $\phi_x \in \partial f(x)$, then $\phi_y \leq \phi_x$.
\end{lemma}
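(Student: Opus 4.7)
The plan is to mirror the proof of Lemma \ref{lemma1} essentially by symmetry, pivoting through the integer point $y+1$ where $f^\textsc{pl}$ coincides with $f$. First I would use the subgradient inequality for $\phi_y \in \partial f^\textsc{pl}(y)$ evaluated at the test point $y+1$: since $f^\textsc{pl}(y)=f(y)$ and $f^\textsc{pl}(y+1)=f(y+1)$ (both $y$ and $y+1$ are integers), this gives the bound $\phi_y \leq f(y+1) - f(y)$. Second, I would use the subgradient inequality for $\phi_x \in \partial f(x)$ evaluated at the same point $y+1$, namely $f(y+1) \geq f(x) + \phi_x(y+1-x)$, which (when $x > y+1$) rearranges to $\phi_x \geq \frac{f(x) - f(y+1)}{x-(y+1)}$. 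The bridge between the two bounds is the standard convexity fact that secant slopes of $f$ are nondecreasing, so $\frac{f(x)-f(y+1)}{x-(y+1)} \geq \frac{f(y+1)-f(y)}{(y+1)-y} = f(y+1) - f(y)$. Chaining these yields $\phi_y \leq f(y+1)-f(y) \leq \phi_x$.

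The main obstacle is handling the boundary case $x = y+1$, where the division above is undefined. In that case $x$ is itself an integer, so $\phi_x \in \partial f(y+1)$ and convexity of $f$ directly gives $\phi_x \geq f(y+1) - f(y)$; combined with the first subgradient inequality, the conclusion $\phi_y \leq \phi_x$ still holds. Apart from this separation into cases, the argument is a routine mirror of Lemma \ref{lemma1}, and no additional properties of $f^\textsc{pl}$ beyond its values at integer points and the convexity of $f$ are needed.
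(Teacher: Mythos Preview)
Your proof is correct and follows essentially the same approach as the paper: both apply the subgradient inequality for $\phi_y$ at $y+1$, the subgradient inequality for $\phi_x$ at an integer below $x$, and bridge via monotonicity of secant slopes of the convex function~$f$. The only minor difference is that the paper evaluates the second subgradient inequality at the test point $y$ rather than $y+1$, obtaining $\phi_x \geq \frac{f(x)-f(y)}{x-y}$ directly; since $x>y$ always holds, this sidesteps the boundary case $x=y+1$ and the need for a case distinction.
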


\noindent
\emph{Proof of Lemma \ref{lemma2}.}
By definition,
$f^\textsc{pl}(y+1) - f^\textsc{pl}(y) \geq \phi_y $ and 
$f(y) - f(x) \geq \phi_x (y - x)$.
Moreover, $y \leq y+1 \leq x$, $f$ is convex and $f$ coincides with $f^\textsc{pl}$ at $y$ and $y+1$, so
$$\phi_y \leq f^\textsc{pl}(y+1) - f^\textsc{pl}(y) =   \frac{f(y+1) - f(y)}{(y+1) - y} \leq  \frac{f^\textsc{pl}(x) - f^\textsc{pl}(y)}{x-y}  \leq \phi_x.$$

The main proof follows.
Let $\mathbf{x}$ be an optimal continuous solution of the RAP--NC. If Equation (\ref{eqprox}) is satisfied, then the proof is complete; otherwise there exists $s \in \{1, \dots, n\}$ such that $|x_s - x^*_s | \geq n - 1$. 
We consider here the case where $x_s \geq x^*_s + n - 1$, the other case being symmetric. 
Let~$r$ be the greatest index in $\{1,\dots,s\}$ such that $\sum_{k=1}^{r-1} x_k \geq \sum_{k=1}^{r-1} x^*_k$, and~$t$ be the smallest index in $\{s,\dots,n\}$ such that $\smash{\sum_{k=1}^{t} x_k \leq \sum_{k=1}^{t} x^*_k}$.
By the definition of $r$~and~$t$, it follows that $\smash{\sum_{i=r}^t x^*_i \geq \sum_{i=r}^t x_i}$, and thus $\smash{\sum_{i\in\{r,\dots,t\}-s} x^*_i \geq \sum_{i\in\{r,\dots,t\}-s} x_i + n - 1}$. Since $|\{r,\dots,t\}-s| \leq n-1$, there exists $u \in \{r,\dots,t\} - s$ such that $x^*_u \geq x_u + 1$. 

Two cases can arise:

If $u < s$, for each $j$ such that $\sigma[j] \in \{ u,\dots, s-1\}$, 
$\bar{a}_j \leq \sum_{k=1}^{\sigma[j]} x_k < \sum_{k=1}^{\sigma[j]} x^{*}_k \leq \bar{b}_j$  and thus \mbox{$\kappa^*_j = \lambda_j = 0$}. As a consequence, $\Phi_{i} \geq \Phi_{i+1}$ and $\Phi^{*}_{i} \leq \Phi^{*}_{i+1}$  for \mbox{$i \in \{u,\dots,s-1\}$}.

If $u > s$, for each  $j$ such that $\sigma[j]   \in \{ s,\dots, u-1\}$, 
$\smash{\bar{a}_j \leq \sum_{k=1}^{\sigma[j]} x^{*}_k < \sum_{k=1}^{\sigma[j]} x_k \leq \bar{b}_j}$ and thus $\lambda^*_j = \kappa_j = 0$. As a consequence, $\Phi_{i} \leq \Phi_{i+1}$ and $\Phi^{*}_{i} \geq \Phi^{*}_{i+1}$  for $i \in \{s,\dots,u-1\}$.

Moreover, $\{x^*_s + n - 1 \leq x_s,  \Phi^*_{s} \in \partial f_s(x^*_s),  \Phi_{s} \in \partial f^{\textsc{pl}}_s(x_s) \} \Rightarrow \Phi^*_{s} \leq \Phi_{s}$ (Lemma~\ref{lemma1}), and
 $\{x_u + 1 \leq x^*_u,   \Phi_{u} \in \partial f^{\textsc{pl}}_u(x_u) ,  \Phi^*_{u} \in \partial f_u(x^*_u)\} \Rightarrow \Phi_{u} \leq \Phi^*_{u}$ (Lemma~\ref{lemma2}).
Combining all the relations leads to 
$
\Phi_s \leq \Phi_u \leq \Phi^{*}_u \leq  \Phi^{*}_s \leq \Phi_s, 
$
and thus there exists $\Psi \in \Re$ such that $\Phi^*_i = \Phi_i = \Psi$ for \mbox{$i \in \{u,\dots,s\}$} if $u < s$ (or \mbox{$i \in \{s,\dots,u\}$} if $s < u$). As in the proof of Theorem \ref{p1}, this implies that 
the functions $f_s$ and $f_u$ are affine with slope $\Psi$ over $[x_s^\textsc{min},x_s^\textsc{max}]$ and $[x_u^\textsc{min},x_u^\textsc{max}]$, respectively, where $x_i^\textsc{min} = \min\{x_i,x^*_i\}$. Observe that the new solution $\mathbf{x'} = \mathbf{x} - \mathbf{e}^s + \mathbf{e}^u$ is a feasible solution with the same cost as $\mathbf{x}$, hence optimal.
Moreover, we note that $\sum_{i=1}^n  \max\{| x'_i - x^*_i | - (n-1),0 \} \leq \sum_{i=1}^n  \max\{| x_i - x^*_i | - (n-1),0 \} - 1$ and/or $\sum_{i=1}^n  \mathbbm{1}\{| x'_i - x^*_i | \leq (n-1) \} \leq \sum_{i=1}^n  \mathbbm{1}\{| x_i - x^*_i | \leq (n-1) \} - 1$, where $\mathbbm{1}(p) = 1$ if and only if $p$ is true. 
Repeating this process leads, in a finite number of steps, to a solution $\mathbf{x''}$ such that $| x''_i - x^*_i | < n-1, \text { for }  i \in \{1,\dots,n\}$.\qedhere
\end{leftbar}
\endproof

%%%%%%%%%%%%%%%%%
\end{document}